\newcommand{\cTold}{\mathcal{T}^\textnormal{old}}
\newcommand{\Edges}{\mathcal{E}}
\newcommand{\Element}{\tau}
\newcommand{\op}[1]{\operatorname{#1}}
\newcommand{\st}{\,|\,}
\newcommand{\Bigst}{\,\Big|\,}
\renewcommand{\d}{\,\textnormal{d}}
\renewcommand{\o}{\textnormal{old}}
\newcommand{\cT}{\mathcal{T}}
\newcommand{\cS}{\mathcal{S}}
\newcommand{\cN}{\mathcal{N}}
\newcommand{\cQ}{\mathcal{Q}}
\newcommand{\cD}{\mathcal{D}}
\newcommand{\cV}{\mathcal{V}}
\newcommand{\cG}{\mathcal{G}}
\newcommand{\cL}{\mathcal{L}}
\newcommand{\cJ}{\mathcal{J}}
\newcommand{\cW}{\mathcal{W}}
\newcommand{\cP}{\mathcal{P}}
\newcommand{\Wl}{\mathbb{W}}
\newcommand{\R}{\mathbb{R}}
\newcommand{\Rinfty}{\mathbb{R}\cup \{\infty\}}
\newcommand{\N}{\mathbb{N}}
\renewcommand{\a}{\alpha}
\newcommand{\eps}{\varepsilon}
\newcommand{\toladapt}{\op{Tol}_{\mathrm{adapt}}}
\newcommand{\tolcoarsen}{\op{Tol}_{\mathrm{derefine}}}
\newcommand{\tolcorrection}{\op{Tol}_{\mathrm{correction}}}
\newcommand{\norm}[2][]{\left\lVert#2\right\rVert_{#1}}
\newcommand{\normc}[1]{\norm[\mathrm{\phi}]{#1}}
\newcommand{\normw}[1]{\norm[{\mathrm{\theta}}]{#1}}
\newcommand{\euclideaninner}[2]{\langle #1, #2 \rangle}
\newcommand{\latentheat}{L}
\newcommand{\kin}{\beta}
\newcommand{\conv}{h_c}
\providecommand{\inlineargmin}[1]{\op{arg\,min\,}_{#1}}
\newtheorem{theorem}{Theorem}[section]
\newtheorem{proposition}{Proposition}[section]
\newtheorem{problem}{Problem}[section]
\title[Multi-phase Penrose--Fife systems]{Numerical approximation of multi-phase Penrose--Fife systems}
\author[Gr\"aser]{Carsten Gr\"aser}
\address{Carsten Gr\"aser\\
Freie Universit\"at Berlin\\
Institut f\"ur Mathematik\\
Arnimallee~6\\
D - 14195~Berlin\\
Germany}
\email{graeser@math.fu-berlin.de}
\author[Kahnt]{Max Kahnt}
\address{Max Kahnt\\
Freie Universit\"at Berlin\\
Institut f\"ur Mathematik\\
Arnimallee~6\\
D - 14195~Berlin\\
Germany}
\email{max.kahnt@math.fu-berlin.de}
\author[Kornhuber]{Ralf Kornhuber}
\address{Ralf Kornhuber\\
Freie Universit\"at Berlin\\
Institut f\"ur Mathematik\\
Arnimallee~6\\
D - 14195~Berlin\\
Germany}
\email{kornhuber@math.fu-berlin.de}
\date{\today}
\begin{document}

\begin{abstract}
 We consider a non-isothermal multi-phase field model. 
 We subsequently discretize implicitly in time and with linear finite elements.
 The arising algebraic problem is formulated in two variables
 where one is the multi-phase field, 
 and the other contains the inverse temperature field.
 We solve this saddle point problem numerically by a non-smooth Schur--Newton approach
 using truncated non-smooth Newton multigrid methods.
 An application in grain growth as occurring in liquid phase crystallization of silicon is considered.
\end{abstract}

\maketitle

\section{Introduction}

The mathematical modelling of phase transitions has a long history 
and has stimulated new developments in the field of variational inequalities
and free boundary value problems over more than three decades~\cite{BaiocchiCapelo1984,Crank1982,kinderlehrer1980introduction}.
Particular attention was paid to problems of Stefan-type~\cite{Visintin2012} 
and their mathematical description by phase field models~\cite{BrokateSprekels96}.
In this approach, phase transitions are represented
by an order parameter that is strongly varying across the (diffuse) interface.
The evolution of the order parameter is typically obtained 
from some gradient flow of a suitable Ginzburg--Landau free energy
that provides non-decreasing entropy (thermodynamical consistency) and
could be mass conserving (phase separation) or non-conserving (phase transition).
More recently Stinner et al.~\cite{Stinner2004} extended
well-established thermodynamically consistent, 
two-phase Penrose--Fife models~\cite{BrokateSprekels96} 
to multiple phases (non-conserved) and components (conserved).
Existence of solutions to the resulting balance equations 
for the energy, order parameters, and concentrations of components
was studied in \cite{stinner2007weak}.

While the numerical analysis of two-phase Penrose--Fife models 
was based on implicit time discretization~\cite{OKlein97},
previous numerical computations with multiple phases and components 
were typically based on an explicit approach~\cite{nestler2008phase}.
In this way, the solution of non-smooth, large-scale algebraic systems
is avoided at the expense of severe stability constraints on the time step.

In this paper, we consider a multi-phase extension of 
the classical Penrose--Fife system~\cite{BrokateSprekels96,OKlein97,penrose1990thermodynamically}.
Following Stinner et al.~\cite{Stinner2004}, this system is 
derived from a general entropy functional that combines a Ginzburg--Landau energy 
with the thermodynamic entropy.
We concentrate on a numerical approach based on semi-implicit time discretization 
(with explicit treatment of the concave terms~\cite{deckelnick2005computation,graser2013time})
and first-order Taylor approximation of nonlinearities associated with inverse temperature.
Variational arguments are used to show  the existence and uniqueness
of solutions of the resulting spatial problems
and the thermodynamic consistency of this time discretization.
Spatial discretization is performed by piecewise linear finite elements
with adaptive mesh refinement based on hierarchical a 
posteriori error estimation~\cite{graeser:2014a,GraeserKornhuberSack2010}. 
The resulting large-scale non-smooth algebraic systems are solved by
non-smooth Schur--Newton multigrid (NSNMG) methods~\cite{graeserthesis,graeser2014,graser2009truncated} 
exploiting again the saddle point structure of these problems.
In our numerical experiments, we observe optimal order of convergence
of the spatial discretization  and mesh-independent, fast convergence speed
of NSNMG
with nested iteration. Furthermore, 
our computations suggest that non-decreasing entropy is preserved under the spatial discretization.
Application to a liquid phase crystallization (LPC) process 
occurring in the fabrication of thin film silicon solar cells~\cite{kuhnapfel2015towards}
underline the potential of the presented solution approach.


\section{Phase field modelling}\label{ss:assumptions}

\subsection{Thermodynamical background}
Let $\Omega \subset \R^d$, $d=1$, $2$, $3$, 
be a bounded domain with Lipschitz boundary $\Gamma= \partial \Omega$.
Following \cite{Stinner2004}, we consider the entropy functional
\begin{align}
 S(e,\phi) = \int_\Omega s(e,\phi) - (\tfrac\eps2 \gamma^2(\phi, \nabla\phi) + \tfrac1\eps \psi(\phi)) \d x,
\end{align}
where the entropy density $s$ depends on the internal energy density $e$ 
and on the multi-phase field $\phi= (\phi_{\a})_{\a=1}^M$, $\gamma$ represents the surface gradient entropy,
and $\psi$ a multi-well potential with $M$ distinct minima. 
The components of $\phi$ describe relative fractions of a given substance.
Hence, it is natural to impose the algebraic constraint
\begin{equation} \label{eq:ONE}
\sum_{\a=1}^M \phi_\a =1.
\end{equation}
We postulate the Gibbs relation~\cite{AltPawlow,Stinner2004}
\begin{align}\label{Gibbs-relation}
  \d f &= -s \d T + \sum_{\a=1}^M  f_{,\phi_\a}  \d \phi_\a 
\end{align}
with  absolute temperature $T>0$ and a Helmholtz free energy density $f=f(T,\phi)$ according to
\begin{equation} \label{energy-contributions}
  e = f + sT.
\end{equation}
As a consequence, we have
\begin{equation}\label{eq:entropytemp}
  s=-f_{,T},\qquad 
 \d s  =  \tfrac{1}{T}(\d e - \d  f)  =   \displaystyle \tfrac{1}{T} \d e- \sum_{\a=1}^M    \tfrac{1}{T} f_{, \phi_{\a}}  \d \phi_\a ,
\end{equation}
and therefore
\begin{equation}\label{eq:ENTROPDIFF}
 s_{, e}= \tfrac{1}{T},\qquad s_{,\phi_\a}= - \tfrac{1}{T} f_{, \phi_\a} \quad \alpha =1, \dots, M.
\end{equation}
We assume that the free energy density $f$
is obtained by interpolation of the individual bulk free energies
$L_\a \tfrac{T-T_\a}{T_\a} - c_vT(\ln(T)-1)$
for each phase $\a$. Here,  $\latentheat_\a \geq 0$ and $T_\a>0$ represent the latent heat and
melting temperature of pure phase $\a$, respectively, and
$c_v>0$ is the specific heat capacity.
In the light of \eqref{eq:ONE}, this leads to
\begin{equation}\label{total-free-energy}
    f(T,\phi) = \sum_{\a=1}^M  L_\a \tfrac{T-T_\a}{T_\a}\phi_\alpha - c_vT(\ln(T)-1),
\end{equation}
and we have
\begin{equation}\label{individual-free-energy}
    f_{, \phi_\a}(T,\phi) = L_\a \tfrac{T-T_\a}{T_\a}.
\end{equation}

Utilizing  \eqref{energy-contributions}, \eqref{eq:entropytemp}, and  the state equation \eqref{total-free-energy}, 
we can represent the entropy $s$ and the energy $e$
in terms of the temperature $T$ and the phase field $\phi$ according to 
\begin{equation} \label{energy}
    s(e,\phi) = \tilde{s}(T,\phi) = - \sum_{\a = 1}^M L_\a \tfrac{1}{T_\a}\phi_\a + c_v \ln(T),  \quad    
e = \tilde{e}(T,\phi) = - \sum_{\a=1}^M  L_\a\phi_\a +c_v T.
\end{equation}
Though our approach could be extended to anisotropic interfacial energies~\cite{graser2013time}, we choose
\begin{equation} \label{eq:INTERFACE}
 \gamma(\phi,\nabla\phi) = \vert \nabla\phi\vert
\end{equation}
for simplicity. Finally, $\psi$ stands for the classical multi-obstacle potential \cite{blowey_elliot:ch_obst_math:1991,barrett1997finite}
\begin{equation}\label{well-potential}
  \psi(\phi) = \chi_G(\phi) + \tfrac12\phi^T K \phi,
\end{equation}
with $\chi$ denoting the characteristic function 
\begin{equation}
    \chi_A(x) = 0 \textnormal{ if } x \in A \textnormal{ and } \chi_A(x) = \infty \textnormal{ if } x \not\in A
\end{equation}
and $G$ the Gibbs simplex
\begin{equation} \label{eq:GIBBSSIMP}
    G=\{v=(v_\a)\in \R^M \st
    \sum_{\a =1}^M v_\a = 1 \text{ and }v_\a \geq 0, \; \a=1,\dots,M\} \subset \R^M.
\end{equation}
We choose the negative definite interaction matrix
\[
    K=- I \in \R^{M,M}
\]
leading to the concave contribution $\phi^T K\phi = - |\phi|^2$ to the multi-obstacle potential.

With these specifications the entropy functional takes the form
\begin{align}\label{eq:entropy_with_obstacle}
 S(e,\phi) =S_0(e,\phi)  - \chi_\cG(\phi), \quad S_0(e,\phi)= \int_\Omega s(e,\phi) - \tfrac{\eps}{2}|\nabla\phi|^2 - \tfrac{1}{2\eps} \phi^TK\phi \d x
\end{align}
where $\chi_\cG$ is the characteristic functional of
\begin{equation}
    \cG=\{v \in H^1(\Omega)^M \st v(x)\in G\text{ a.e. in }\Omega\}.
\end{equation}

\subsection{A multi-phase Penrose--Fife system}

We postulate the continuity equation
\begin{equation} \label{eq:ECONT}
e_t=-\nabla \cdot  J_0(e,\phi) + q(e,\phi)
\end{equation}
with the flux 
\[
J_0(e,\phi) =   
\kappa  \nabla (\delta_e S)(e, \phi),
\]
mobility $\kappa>0$, variational derivative $\delta_e S$, and  a source term $q(e,\phi)$ to obtain
\begin{equation} \label{energy-balance}
e_t=  - \nabla \cdot \kappa  \nabla ( \delta_e  S ) (e,\phi)  + q(e, \phi).
\end{equation}

We assume that the outward energy flux is proportional to the difference of the temperature $T$ and 
a given boundary temperature $T_\Gamma$ or, more precisely, we prescribe 
\begin{equation} \label{eq:ENGBND}
J_0 \cdot n =   \conv (T -T_\Gamma)
\end{equation}
with the convection coefficient $\conv>0$ and the outward normal $n$ to $\Omega$.
It is also convenient to introduce the inverse temperature
\begin{equation} \label{eq:TTRANS}
\theta = \frac{1}{T}, \qquad \theta_\Gamma = \frac{1}{T_\Gamma}.
\end{equation}
Note that for given $\phi$ the variables  $e$, $T$, and $\theta$ can be transformed into each
other due to the strictly monotone relationships \eqref{energy} and \eqref{eq:TTRANS}.

In order to provide a non-decreasing entropy $S(e,\phi)$ in the course of the phase evolution, we set
\begin{equation} \label{maximize-entropy}
\eps\kin  \phi _t \in  \delta_\phi S_0(e,\phi) - \partial \chi_\cG(\phi) ,
\end{equation}
with a kinetic coefficient $\kin>0$,
the variational derivative $\delta_\phi S_0(e,\phi)$,
and the subdifferential $\partial \chi_\cG(\phi)$ of the convex functional $\chi_\cG$.
For the phase field we impose homogeneous Neumann boundary conditions
\begin{equation}\label{eq:PHASEBND}
\frac{\partial}{\partial n}\phi_\alpha = 0, \quad \alpha =1, \dots, M.
\end{equation}
Utilizing \eqref{energy}, \eqref{eq:INTERFACE}, \eqref{well-potential}, 
and the transformation \eqref{eq:TTRANS},
a weak formulation of the differential equations \eqref{energy-balance} and
\eqref{maximize-entropy} with  boundary conditions 
\eqref{eq:ENGBND} and \eqref{eq:PHASEBND}, respectively, reads as follows.

\begin{problem}[Multi-phase Penrose--Fife system with obstacle potential] \ \\%
 \label{prob:PF}%
Find the phase field
$\phi \in  L^2( (0, t^\ast), H^1(\Omega)^M) \cap H^1((0, t^\ast), L^2(\Omega)^M)$ and positive
inverse temperature
 $\theta \in L^2((0, t^\ast), H^1(\Omega)) \cap H^1((0, t^\ast), L^2(\Omega))$
such that
\begin{equation} \label{eq:INIT}
\phi(\cdot,0) = \phi^0,\qquad \theta(\cdot,0) = \theta^0
\end{equation}
holds with given initial conditions  $\phi^0\in L^2(\Omega)^M$, $\theta^0\in L^2(\Omega)$, 
$\theta^0 \geq 0$ a.e.\ in $\Omega$
and
\begin{subequations}  \label{pvi}
\begin{align} %
    \displaystyle  (\eps\kin \phi_t + \widetilde{\latentheat} - \theta \latentheat +\tfrac1\eps K\phi,v-\phi) 
 + \eps(\nabla \phi, \nabla (v-\phi)) + \chi_\cG(v)-\chi_\cG(\phi) &\geq 0 \label{pvi:1} \\
 \left(  -L^T\phi_t + c_v \tfrac{1}{\theta^2}\theta_t - q, w\right)
  - (\kappa\nabla \theta ,  \nabla w) + \left(\conv \left(\tfrac{1}{\theta} - \tfrac{1}{\theta_\Gamma}\right), w\right)_{\Gamma} &= 0
 \label{pvi:2}
\end{align}%
\end{subequations}%
holds for all $v\in H^1(\Omega)^M$ and $w \in H^1(\Omega)$.
\end{problem}

Here, $(0,t^\ast)\in \R$ is the considered time interval,
$\latentheat=(\latentheat_\alpha)_{\alpha=1}^M$ and 
$\widetilde{\latentheat}=(\frac{\latentheat_\alpha}{T_\a})_{\alpha=1}^M$ 
are constant vectors used to simplify the notation of~\eqref{eq:ENTROPDIFF} and \eqref{total-free-energy}, 
and $(\cdot, \cdot)$, $(\cdot, \cdot)_{\Gamma}$ stand for the scalar product in $L^2(\Omega)$,
$L^2(\Gamma)$, respectively.

For existence and uniqueness results 
in the special case $M=2$, we refer to  \cite[Section 7.2]{BrokateSprekels96},
\cite{OKlein97}, and the references cited therein.

\begin{proposition} \label{prop:THERMOCONSISTENCY}
The multi-phase Penrose--Fife system is thermodynamically consistent in the sense that 
\begin{equation} \label{eq:TCC}
S(e(t), \phi(t))\geq S(e(t_0), \phi(t_0)) \quad \forall  t \in [t_0,t^*]\subset (0,t^*]
\end{equation}
holds for any solution $(\phi, \theta)$ of Problem~\ref{prob:PF} with $q=0$, $\conv=0$
satisfying $\phi \in C^1([t_0, t^*], H^1(\Omega)^M)$ for $[t_0,t^*]\subset (0,t^*]$.
\end{proposition}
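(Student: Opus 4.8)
The plan is to prove \eqref{eq:TCC} by showing that $t\mapsto S(e(t),\phi(t))$ is non-decreasing, differentiating it along the trajectory and recognizing its derivative as a sum of non-negative dissipation terms. First I would note that any solution is feasible, $\phi(t)\in\cG$ for a.e.\ $t$: otherwise the term $\chi_\cG(\phi)$ in \eqref{pvi:1} would be infinite. Hence $\chi_\cG(\phi(t))=0$ along the trajectory, and by \eqref{eq:entropy_with_obstacle} we have $S(e(t),\phi(t))=S_0(e(t),\phi(t))$, so it suffices to show $\tfrac{d}{dt}S_0(e(t),\phi(t))\geq 0$ for a.e.\ interior $t\in(t_0,t^*)$. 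Using the $C^1$-regularity of $\phi$ together with the regularity and strict positivity of $\theta$, I would apply the chain rule to $S_0$ and split the result, via \eqref{eq:ENTROPDIFF}, into an energy contribution $(s_{,e},e_t)=(\theta,e_t)$ and a phase contribution $(s_{,\phi},\phi_t)-\eps(\nabla\phi,\nabla\phi_t)-\tfrac1\eps(K\phi,\phi_t)$, where $s_{,\phi}=\theta\latentheat-\widetilde\latentheat$ by \eqref{eq:ENTROPDIFF} and \eqref{individual-free-energy}.

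For the phase contribution I would exploit the gradient-flow structure of \eqref{pvi:1}. Fixing an interior time $t$ and inserting the feasible test functions $v=\phi(t\pm h)$ into \eqref{pvi:1}, the obstacle terms cancel because $\chi_\cG(\phi(t\pm h))=\chi_\cG(\phi(t))=0$. Dividing by $\pm h$ and letting $h\to 0^+$ then yields two opposite inequalities, hence the equality
\[
\eps\kin\|\phi_t\|^2 + (\widetilde\latentheat-\theta\latentheat+\tfrac1\eps K\phi,\phi_t) + \eps(\nabla\phi,\nabla\phi_t) = 0.
\]
Rearranging and substituting $s_{,\phi}=\theta\latentheat-\widetilde\latentheat$ identifies the phase contribution as $\eps\kin\|\phi_t\|^2\geq 0$. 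For the energy contribution I would test the weak energy balance \eqref{pvi:2}, whose first bracket equals $(e_t-q,w)$, with $w=\theta$; since $q=0$ and $\conv=0$ the boundary term coming from \eqref{eq:ENGBND} vanishes, so that $(\theta,e_t)=(\kappa\nabla\theta,\nabla\theta)=\kappa\|\nabla\theta\|^2\geq 0$. Adding the two contributions gives
\[
\tfrac{d}{dt}S_0(e(t),\phi(t))=\kappa\|\nabla\theta\|^2+\eps\kin\|\phi_t\|^2\geq 0,
\]
and integrating over $[t_0,t]$ yields \eqref{eq:TCC}.

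The main obstacle I expect is analytic rather than algebraic: rigorously justifying the chain rule for $S_0$—whose density contains the nonsmooth obstacle indicator and the concave logarithmic entropy term from \eqref{energy}—along the trajectory, and passing to the limit $h\to 0^+$ in the tested variational inequality. The hypothesis $\phi\in C^1([t_0,t^*],H^1(\Omega)^M)$ is precisely what makes the difference quotients $h^{-1}(\phi(t\pm h)-\phi(t))$ converge in $H^1(\Omega)^M$, so that the limit of \eqref{pvi:1} is legitimate, while the strict positivity of $\theta$ keeps the entropy density and the product $(\theta,e_t)$ well-defined. Some care is also needed to confirm that the obstacle terms cancel exactly (which rests on feasibility $\phi(t\pm h)\in\cG$, i.e.\ the trajectory remaining in $\cG$) and that the surface term drops precisely when $\conv=0$.
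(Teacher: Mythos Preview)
Your proposal is correct and follows essentially the same route as the paper: test \eqref{pvi:1} with a time-shifted $\phi$ and pass to the limit, test \eqref{pvi:2} with $w=\theta$, add, and integrate. The paper works in the variables $(\theta,\phi)$ via the representation $\hat S(\theta,\phi)$ of \eqref{eq:S_hat} rather than through $(e,\phi)$ and $S_0$, but this is only a change of coordinates. The one noteworthy difference is that the paper tests \eqref{pvi:1} only with $v=\phi(t-\tau)$, obtaining the one-sided inequality $0\le\eps\kin\|\phi_t\|^2\le(-\widetilde L+\theta L-\tfrac1\eps K\phi,\phi_t)-\eps(\nabla\phi,\nabla\phi_t)$, which already suffices for monotonicity; your two-sided test with $v=\phi(t\pm h)$ yields the sharper equality and hence the exact entropy-production identity $\tfrac{d}{dt}S=\kappa\|\nabla\theta\|^2+\eps\kin\|\phi_t\|^2$, but this extra precision is not needed for \eqref{eq:TCC}.
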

\begin{proof}
    Since $\phi \in \cG$ for almost all $t$ and in view of \eqref{energy} and \eqref{eq:entropy_with_obstacle} we can write
    \begin{align}\label{eq:S_hat}
        S(e,\phi) = \hat S(\theta,\phi) = \int_\Omega -\widetilde{\latentheat}^T \phi + c_v \ln(\tfrac1\theta) - \tfrac\eps2 \lvert \nabla\phi \rvert^2 - \tfrac1{2\eps} \phi^T K \phi \d x.
    \end{align} 
    Testing \eqref{pvi:1} with $v = \phi(t-\tau)$, dividing by $\tau>0$, and letting $\tau \to 0$
    we get
    \begin{align*}
        0 \leq \eps \Vert \sqrt{\kin}\phi_t \Vert^2
            \leq ( - \widetilde{\latentheat} + \theta \latentheat - \tfrac1\eps K\phi, \phi_t) - \eps(\nabla\phi, \nabla\phi_t)
    \end{align*}
    whereas testing \eqref{pvi:2} with $w=\theta$ yields
    \begin{align*}
        0 \leq (\kappa \nabla\theta, \nabla \theta) =
        (-\latentheat^T\phi_t + c_v \tfrac1{\theta^2}\theta_t, \theta).
    \end{align*}
    Adding both we get
    \begin{align*}
        0
        \leq ( - \widetilde{\latentheat}, \phi_t) - ( \tfrac{c_v}{\theta}, \theta_t) - \eps(\nabla\phi, \nabla\phi_t) - \tfrac1\eps(K\phi, \phi_t)
        = \langle \nabla \hat S(\theta, \phi)), (\theta_t, \phi_t)\rangle.
    \end{align*}
    Now integrating over $[t_0,t]$ provides the assertion.
\end{proof}

\subsection{Thin film approximation}
We consider a domain of the form $\Omega = \Omega' \times (0,H) \subset \R^d$, $d = 2,3$,
with a bounded Lipschitz domain $\Omega'\subset \R^{d-1}$ and $H>0$.
We assume that $\Omega$ is "thin"  in the sense that 
variations of $\phi$, $\phi^0$, $\theta$, $\theta^0$, and $q$ normal to $\Omega'$ as well as
the flux $J_0$ across $\partial \Omega'\times (0,H)$ can be neglected
and $J_0\cdot n(\cdot,0) = J_0\cdot n (\cdot,H)$ holds a.e.\ in $\Omega'$.
Inserting these assumptions into \eqref{eq:INIT}, \eqref{pvi}, 
we obtain the following thin film approximation of Problem~\ref{prob:PF}.

\begin{problem}[Thin film multi-phase Penrose--Fife system] \ \\%
 \label{prob:PFTHIN}%
Find the phase field 
$\phi \in  L^2( (0, t^\ast), H^1(\Omega')^M) \cap H^1((0, t^\ast), L^2(\Omega')^M)$ 
and positive inverse temperature
 $\theta \in L^2((0, t^\ast), H^1(\Omega')) \cap H^1((0, t^\ast), L^2(\Omega'))$
such that
\begin{equation}
\phi(\cdot,0) = \phi^0 ,\qquad \theta(\cdot,0) = \theta^0
\end{equation}
holds with given initial conditions  $\phi^0 \in L^2(\Omega)^M$, $\theta^0 \in L^2(\Omega)$, 
$\theta^0 \geq 0$ a.e.\ in $\Omega$
and
\begin{subequations} \label{pvi2D}
\begin{align}%
    \displaystyle  (\eps\kin \phi_t + \widetilde{\latentheat} - \theta \latentheat +\tfrac1\eps K\phi,v-\phi) 
 + \eps(\nabla \phi, \nabla (v-\phi)) + \chi_\cG(v)-\chi_\cG(\phi) &\geq 0 \label{pv2Di:1} \\
 \left(  -L^T\phi_t + c_v \tfrac{1}{\theta^2}\theta_t +\conv'\tfrac1\theta - \conv'\tfrac1{\theta_\Gamma} -q, w\right)
  - (\kappa\nabla \theta ,  \nabla w)  &= 0
 \label{pvi2D:2}
\end{align}%
\end{subequations}%
and $\conv'=\tfrac{2\conv}{H}$
for all $v\in H^1(\Omega')^M$ and $w \in H^1(\Omega')$.
\end{problem}

Here, $(\cdot, \cdot)=(\cdot, \cdot)_{\Omega'}$  stands for the  scalar product in $L^2(\Omega')$
for ease of notation.
Note that Problem~\ref{prob:PFTHIN} is essentially a 
$(d-1)$-dimensional analogue of Problem~\ref{prob:PF} with similar mathematical properties.
For example, the thermodynamic consistency in the sense of
Proposition~\ref{prop:THERMOCONSISTENCY}  is still valid
and all considerations concerning  the discretization and algebraic
solution of discretized problems to be reported below
carry over from Problem~\ref{prob:PF} to its thin film approximation.
For this reason, in the remainder we consider the more general equation
\begin{align}
(  -L^T\phi_t + c_v \tfrac{1}{\theta^2}\theta_t +h_{\Omega}\tfrac1\theta - q', w)
  - (\kappa\nabla \theta ,  \nabla w)  + (h_\Gamma(\tfrac1\theta - \tfrac1{\theta_\Gamma}), w)_\Gamma = 0
  \label{eq:therm-general}
\end{align}
with  the coefficients $h_\Omega, h_\Gamma \geq 0$, of which exactly one is zero
to recover Problems~\ref{prob:PF} resp.~\ref{prob:PFTHIN}.
Specifically, choose
$h_\Omega =0$, $h_\Gamma = \conv$, and $q'=q$ to obtain \eqref{pvi:2} and
$h_\Omega =\conv'$, $h_\Gamma = 0$, and $q'= q + \conv'\tfrac1{\theta_\Gamma}$ to obtain \eqref{pvi2D:2}.

\section{Discretization}
\label{section:discretization}
In this section we present a discretization of Problem~\ref{prob:PF} and Problem~\ref{prob:PFTHIN}
using the general equation \eqref{eq:therm-general}
by  Euler-type discretizations in time and finite elements in space.
Since an efficient approximation of the phase field
$\phi$ requires time-dependent,  locally refined spatial grids,
it is convenient to use Rothe's method~\cite{bornemann:adapt_multilevel_parabolic:1991},
i.e., the variational problem \eqref{pvi} is first discretized in time 
and the resulting spatial problems are then discretized in space, 
independently from each other.

\subsection{Implicit time discretization}

In light of  the well-known stiffness of the non-linear parabolic system of equations,
we use a semi-implicit Euler method.
More precisely, 
after approximating the time derivatives $\phi_t, \theta_t$ by backward finite differences with step size $\tau>0$, 
the nonlinearities $1/\theta$, $1/\theta^2$ are approximated by first-order Taylor expansion
(cf., e.g., \cite[Section 6.4]{BornemannDeuflhard2008})
\[
    \tfrac1{\theta(t)} 
    \doteq \tfrac{2}{\theta(t-\tau)} - \tfrac{\theta(t)}{(\theta(t-\tau))^2},\qquad
    \tfrac1{\theta(t)^2}
    \doteq \tfrac{3}{\theta(t-\tau)^2} - \tfrac{2\theta(t)}{\theta(t-\tau)^3}.
\]
In particular, this leads to
\[
\tfrac{1}{\theta(t)^2} \theta_t(t)
\doteq \tfrac{1}{\theta(t)^2 }\tfrac{\theta(t)-\theta(t-\tau)}{\tau}
=\tfrac{1}{\tau}\Bigl(\tfrac{1}{\theta(t)} - \tfrac{\theta(t-\tau)}{\theta(t)^2} \Bigr)
\doteq \tfrac{1}{\tau} 
\Bigl(\tfrac{\theta(t)}{\theta(t-\tau)^2} - \tfrac{1}{\theta(t-\tau)} \Bigr).
\]
Only the concave term $\frac{1}{\eps}K\phi$ is taken explicitly~\cite{deckelnick2005computation,graser2013time},
trading unconditional stability for a potential loss of accuracy, 
c.f.~\cite{blank2013primal} or ~\cite[section 6.3.1]{bartels2015numerical}.

For simplicity, we utilize the uniform time step size $\tau= t^*/n^*$ with given $n^*\in \N$,
and denote  the approximations of  $\phi(t_n)$, $\theta(t_n)$ at $t_n=n\tau$, $n=1,\dots,n^*$ by  $\phi^n$, $\theta^n$,
respectively. The spatial problem to be solved in 
the $n$-th time step then reads as follows.

\begin{problem}[Spatial multi-phase Penrose--Fife system with obstacle potential]\ \\%
\label{prob:PFSPAT}%
Find the phase field
$\phi^n \in  H^1(\Omega)^M$ and positive inverse temperature $\theta^n \in H^1(\Omega)$
such that
\begin{subequations}  \label{pviTD}
    \begin{align}
        a(\phi^n,v-\phi^n) + \chi_\cG(v) - \chi_\cG(\phi^n) 
        + b(v-\phi^n,\theta^n) &\geq \ell_1^n(v-\phi^n) \label{pviTD1}\\
        b(\phi^n,w) - c^n(\theta^n,w) &= \ell_2^n(w) \label{pviTD2}
    \end{align}%
\end{subequations}%
holds with the bilinear forms
\begin{subequations}\label{eq:BIFO}
    \begin{align}
        a(v,v') &= \eps(\kin v,v') + \eps\tau\left(\nabla v,\nabla v'\right), \\
        b(v,w) &= -\tau(L^T v,w),\\
        c^n(w,w') &= \tau(\tfrac{c_v + \tau h_\Omega}{(\theta^{n-1})^2} w,w' )
        +  \tau^{2} (\tfrac{h_\Gamma}{(\theta^{n-1})^2} w,w' )_\Gamma
        +  \tau^{2} (\kappa\nabla w, \nabla w' )
\intertext{and the linear functionals}
         \ell_1^n(v)&= (\eps \kin \phi^{n-1} - \tau  \widetilde{\latentheat}  - \tfrac{\tau}{\eps} K\phi^{n-1},v),  \label{eq:ell1}\\
        \ell_2^n(w)&= (\tau^{2} q' - \tau L^T \phi^{n-1} - \tau\tfrac{c_v + 2\tau h_\Omega}{\theta^{n-1}},w) - \tau^{2} h_\Gamma (  \tfrac{2}{\theta^{n-1}} -  \tfrac{1}{\theta_\Gamma} ,w )_\Gamma
    \end{align}
\end{subequations}
defined for all $v, v'\in H^1(\Omega)^M$ and $w,w' \in H^1(\Omega)$.
\end{problem}

\begin{proposition} \label{prop:THERMOCONSISTENCY-disc}
The time-discrete multi-phase Penrose--Fife system is thermodynamically consistent
in the sense that 
\begin{equation} \label{eq:TCC-disc}
S(e^n, \phi^n)\geq S(e^{n-1}, \phi^{n-1}) 
\end{equation}
holds for any solution $(\phi^n, \theta^n)$ of Problem~\ref{prob:PFSPAT} with $q=0$, $\conv=0$.
Here, $e^n = \tilde{e}(\tfrac{1}{\theta^n},\phi^n)$ is defined according to~\eqref{energy}.
\end{proposition}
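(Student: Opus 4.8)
The plan is to carry over the argument of Proposition~\ref{prop:THERMOCONSISTENCY} to the time-discrete level, replacing the time derivative and the chain rule by difference quotients and discrete convexity/concavity estimates. Since a solution satisfies $\phi^{n-1},\phi^n\in\cG$, the representation \eqref{eq:S_hat} applies and the claim \eqref{eq:TCC-disc} becomes $\hat S(\theta^n,\phi^n)\ge\hat S(\theta^{n-1},\phi^{n-1})$. The hypotheses $q=0$ and $\conv=0$ force $h_\Omega=h_\Gamma=0$ and $q'=0$, so that $c^n$ and $\ell_2^n$ lose their boundary and source contributions.

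I would extract two algebraic relations: testing \eqref{pviTD1} with the admissible choice $v=\phi^{n-1}$, which makes $\chi_\cG(\phi^{n-1})=\chi_\cG(\phi^n)=0$, and testing \eqref{pviTD2} with $w=\theta^n$. The inertial part of $a$ then yields the dissipation $\eps\kin\|\phi^n-\phi^{n-1}\|^2\ge0$, while the symmetric form $c^n(\theta^n,\theta^n)$ yields the nonnegative quantities $\tau(\frac{c_v}{(\theta^{n-1})^2}\theta^n,\theta^n)$ and $\tau^2\|\sqrt\kappa\nabla\theta^n\|^2$. Eliminating the coupling term $b(\phi^{n-1}-\phi^n,\theta^n)$ between the two relations and dividing by $\tau$ produces the single inequality
\begin{align*}
\eps(\nabla\phi^n,\nabla(\phi^{n-1}-\phi^n)) + \tfrac1\eps(K\phi^{n-1},\phi^{n-1}-\phi^n) + (\widetilde{\latentheat},\phi^{n-1}-\phi^n) + T_\theta \ge 0,
\end{align*}
where $T_\theta:=(\frac{c_v}{\theta^{n-1}},\theta^n)-(\frac{c_v}{(\theta^{n-1})^2}\theta^n,\theta^n)$ gathers the temperature terms and the nonnegative dissipation has been dropped from the right-hand side.

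It then remains to bound $\hat S(\theta^n,\phi^n)-\hat S(\theta^{n-1},\phi^{n-1})$ from below by the left-hand side above, contribution by contribution. The linear term $-\widetilde{\latentheat}^T\phi$ reproduces $(\widetilde{\latentheat},\phi^{n-1}-\phi^n)$ exactly. For the concave gradient term $-\tfrac\eps2|\nabla\phi|^2$, treated implicitly, the identity $\|\nabla\phi^n\|^2-\|\nabla\phi^{n-1}\|^2=2(\nabla\phi^n,\nabla(\phi^n-\phi^{n-1}))-\|\nabla(\phi^n-\phi^{n-1})\|^2$ gives the lower bound $\eps(\nabla\phi^n,\nabla(\phi^{n-1}-\phi^n))$; for the convex term $-\tfrac1{2\eps}\phi^TK\phi$ (recall $K=-I$), treated explicitly at $\phi^{n-1}$, the tangent-line inequality gives $\tfrac1\eps(K\phi^{n-1},\phi^{n-1}-\phi^n)$. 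These reproduce precisely the phase-field part of the assembled inequality.

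The one genuinely discrete estimate, which I expect to be the crux, concerns the logarithmic term $-c_v\ln\theta$, where no chain rule is available. With $r=\theta^n/\theta^{n-1}>0$ the density of the entropy increment equals $-c_v\ln r$, whereas the density of $T_\theta$ equals $c_v(r-r^2)$, so the desired bound reduces to the pointwise inequality $r^2-r-\ln r\ge0$ for all $r>0$. This holds because $h(r)=r^2-r-\ln r$ is convex with $h(1)=h'(1)=0$, so $r=1$ is its global minimum; this is exactly the point at which the specific first-order Taylor approximation of $1/\theta$ and $1/\theta^2$ built into the scheme is seen to preserve consistency. Adding the four lower bounds and invoking the assembled inequality gives $\hat S(\theta^n,\phi^n)-\hat S(\theta^{n-1},\phi^{n-1})\ge0$, which is the assertion.
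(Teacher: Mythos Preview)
Your proof is correct and follows essentially the same route as the paper: test \eqref{pviTD1} with $v=\phi^{n-1}$, test \eqref{pviTD2} with $w=\theta^n$, add, and reduce the temperature contribution to the pointwise inequality $r^2-r-\ln r\ge0$ for $r=\theta^n/\theta^{n-1}$ (which the paper writes equivalently as $\ln r + r(1-r)\le 0$). The only organizational difference is that you bound each piece of $\hat S(\theta^n,\phi^n)-\hat S(\theta^{n-1},\phi^{n-1})$ from below by the corresponding term in the assembled inequality, whereas the paper first derives a direct upper bound for $S(e^{n-1},\phi^{n-1})-S(e^n,\phi^n)$; the algebraic content is identical.
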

\begin{proof}

Testing equation~(\ref{pviTD1}) with $v=\phi^{n-1}$ yields
\begin{align*}
\tfrac\eps2\vert\nabla\phi^n\vert^2 - \tfrac\eps2\vert\nabla\phi^{n-1}\vert^2 &\leq ( \widetilde\latentheat - \latentheat\theta^n + \tfrac1\eps K\phi^{n-1}, \phi^{n-1}-\phi^n)  
 \end{align*}
and testing equation~(\ref{pviTD2}) with $w=\theta^n$ for $q=0, \conv=0$ yields
\begin{align*}
    0 \leq  \tau \kappa\vert \nabla \theta^n \vert^2 &= (\latentheat\theta^n, \phi^{n-1}-\phi^n) + c_v\int_\Omega \tfrac{\theta^n}{\theta^{n-1}}(1-\tfrac{\theta^n}{\theta^{n-1}})\d x.\end{align*}
Adding both inequalities, we obtain
\begin{multline*}
    - ( \widetilde\latentheat, \phi^{n-1}-\phi^n)  - \tfrac\eps2(\vert\nabla\phi^{n-1}\vert^2 - \vert\nabla\phi^n\vert^2)  \\
    \leq  \,\, c_v\int_\Omega \tfrac{\theta^n}{\theta^{n-1}}(1-\tfrac{\theta^n}{\theta^{n-1}})\d x + \tfrac1\eps(K\phi^{n-1},\phi^{n-1} - \phi^n)
\end{multline*}
which, using the representation~\eqref{eq:S_hat}, finally can be used to show
\begin{align*}
S(e^{n-1},\phi^{n-1}) - S(e^n,\phi^n) 
  \leq
  \int_\Omega c_v (\ln(\tfrac{\theta^{n}}{\theta^{n-1}})+\tfrac{\theta^n}{\theta^{n-1}}(1-\tfrac{\theta^n}{\theta^{n-1}}))\d x.
\end{align*}
Since the right hand side is non-positive, this provides the assertion.
\end{proof}

It turns out that the system \eqref{pviTD}  can be regarded as optimality conditions
for a Lagrange-functional.

\begin{proposition} \label{prop:timediscrete_saddle}
    Problem~\ref{prob:PFSPAT} is equivalent to find 
    $\phi^n \in  H^1(\Omega)^M$ and  $\theta^n \in H^1(\Omega)$ such that
    \begin{equation}
        \cL^n(\phi^n,w)
            \leq \cL^n(\phi^n, \theta^n)
            \leq \cL^n(v, \theta^n)
            \qquad \forall \;  v \in H^1(\Omega)^M,\; w \in H^1(\Omega),
    \end{equation}
    with the Lagrangian $\cL^n$ given by
    \begin{align}
        \label{eq:lagrange_functional}
        \cL^n(v,w) = \cJ^n(v) - \ell_1^n(v) + b(v,w) -\ell_2^n(w)
            - \tfrac{1}{2}c^n(w,w)
    \end{align}
    denoting $\cJ^n(v)=\tfrac{1}{2}a(v,v) + \chi_\cG(v)$.
\end{proposition}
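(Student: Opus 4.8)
The plan is to exploit the separate convex--concave structure of $\cL^n$ and to decouple the two saddle point inequalities, recognizing each as a first-order optimality condition. First I would observe that, for fixed $w$, the map $v \mapsto \cL^n(v,w)$ is convex: its only nonlinear contributions are the quadratic term $\tfrac12 a(v,v)$, which is convex since $a$ is symmetric and positive definite (because $\kin,\eps,\tau>0$ give $a(v,v)=\eps\kin\norm{v}^2+\eps\tau\norm{\nabla v}^2$), and the indicator $\chi_\cG$, which is convex as $\cG$ is a convex set; the remaining terms $-\ell_1^n(v)$ and $b(v,w)$ are affine in $v$. Symmetrically, for fixed $v$ the map $w \mapsto \cL^n(v,w)$ is concave, since $c^n$ is symmetric and positive semidefinite (each of its three contributions is, using $\theta^{n-1}>0$, $\kappa, c_v>0$, $h_\Omega, h_\Gamma\geq 0$), so $-\tfrac12 c^n(w,w)$ is concave while $b(v,w)-\ell_2^n(w)$ is affine in $w$. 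This is exactly the structure under which the two inequalities defining a saddle point decouple.

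Then I would treat the right inequality $\cL^n(\phi^n,\theta^n)\leq \cL^n(v,\theta^n)$, which says precisely that $\phi^n$ minimizes the convex functional $v \mapsto \cL^n(v,\theta^n)$ over $H^1(\Omega)^M$. Splitting this functional (up to a constant in $v$) into its G\^ateaux-differentiable part $F(v)=\tfrac12 a(v,v)-\ell_1^n(v)+b(v,\theta^n)$ and the convex indicator $\chi_\cG$, I would invoke the standard characterization of minimizers of a sum of a differentiable convex functional and a convex function: $\phi^n$ is a minimizer iff $\euclideaninner{F'(\phi^n)}{v-\phi^n} + \chi_\cG(v)-\chi_\cG(\phi^n)\geq 0$ for all $v$. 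Computing $F'(\phi^n)[v-\phi^n]=a(\phi^n,v-\phi^n)-\ell_1^n(v-\phi^n)+b(v-\phi^n,\theta^n)$ and rearranging reproduces the variational inequality~\eqref{pviTD1} verbatim. For completeness I would verify both implications: ``minimizer $\Rightarrow$ inequality'' follows by differentiating along the segment $\phi^n + t(v-\phi^n)$ and using the convexity of $\chi_\cG$ to pass the difference quotient to the limit $t\to 0^+$, while the converse ``inequality $\Rightarrow$ minimizer'' is immediate from the subgradient inequality $F(v)\geq F(\phi^n)+\euclideaninner{F'(\phi^n)}{v-\phi^n}$.

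Next I would treat the left inequality $\cL^n(\phi^n,w)\leq \cL^n(\phi^n,\theta^n)$, which says that $\theta^n$ maximizes the smooth concave functional $w\mapsto \cL^n(\phi^n,w)=b(\phi^n,w)-\ell_2^n(w)-\tfrac12 c^n(w,w)$ (up to a constant) over the \emph{unconstrained} space $H^1(\Omega)$. Since this functional is everywhere G\^ateaux differentiable and concave, $\theta^n$ is a global maximizer iff its derivative vanishes, i.e. $b(\phi^n,w)-\ell_2^n(w)-c^n(\theta^n,w)=0$ for all $w$; rearranging yields exactly equation~\eqref{pviTD2}. Here the absence of a constraint on $\theta$ is what turns the variational inequality into an equality. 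Assembling the two equivalences, the saddle point inequalities hold simultaneously if and only if~\eqref{pviTD1} and~\eqref{pviTD2} hold, which is the assertion. I expect the only genuinely delicate point to be the nonsmooth half of the second step---correctly handling the indicator $\chi_\cG$ when passing from the minimization property to the variational inequality~\eqref{pviTD1}---whereas everything else reduces to identifying convexity/concavity and differentiating affine and quadratic forms.
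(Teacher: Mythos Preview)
Your argument is correct and follows the natural line: identify the convex--concave structure of $\cL^n$, then decouple the two saddle point inequalities into the first-order optimality conditions for the constrained minimization in $v$ (yielding \eqref{pviTD1} via the subdifferential characterization of minimizers of a smooth convex functional plus an indicator) and the unconstrained maximization in $w$ (yielding \eqref{pviTD2} via the vanishing of the G\^ateaux derivative). The paper itself states Proposition~\ref{prop:timediscrete_saddle} without proof, treating the equivalence as a standard fact about quadratic saddle point problems with convex constraints; your write-up supplies exactly the expected verification.
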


\begin{theorem} \label{thm:timediscrete_existence}
    Let $\phi^{n-1} \in \cG$, $\theta^{n-1}\in H^1(\Omega)$ and 
    $\theta^{n-1} \geq  c$ a.e.\ in $\Omega$ with a positive constant $c$.
    Then the spatial Problem~\ref{prob:PFSPAT}  admits a unique solution.
\end{theorem}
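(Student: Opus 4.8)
The plan is to exploit the saddle-point reformulation of Proposition~\ref{prop:timediscrete_saddle} and to establish existence of a (necessarily unique) saddle point of the Lagrangian $\cL^n$ on the reflexive Hilbert space $H^1(\Omega)^M \times H^1(\Omega)$ by means of the classical existence theorem for convex--concave functionals on reflexive Banach spaces (see, e.g., Ekeland and Temam). Accordingly it suffices to verify: (i) for each fixed $w$ the functional $v \mapsto \cL^n(v,w)$ is proper, convex and lower semicontinuous, while for each fixed $v$ the functional $w\mapsto \cL^n(v,w)$ is concave and upper semicontinuous; and (ii) the two coercivity conditions, namely that there is some $w_0$ with $\cL^n(v,w_0)\to+\infty$ as $\norm[H^1]{v}\to\infty$, and some $v_0\in\cG$ with $\cL^n(v_0,w)\to-\infty$ as $\norm[H^1]{w}\to\infty$.

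Item (i) is structural. Reading off \eqref{eq:lagrange_functional} together with \eqref{eq:BIFO}, the $v$-dependence consists of the quadratic form $\tfrac12 a(v,v)$, the proper convex lower semicontinuous indicator $\chi_\cG$, and the linear terms $-\ell_1^n(v)+b(v,w)$, so $\cL^n(\cdot,w)$ is convex and lower semicontinuous (hence also weakly lower semicontinuous). The $w$-dependence consists of $-\tfrac12 c^n(w,w)$ and linear terms, so $\cL^n(v,\cdot)$ is concave and upper semicontinuous. Here the hypothesis $\theta^{n-1}\ge c>0$ enters: it guarantees $1/\theta^{n-1},\,1/(\theta^{n-1})^2\in L^\infty(\Omega)$, whence the coefficients of $c^n$ and $\ell_2^n$ are well defined and $a,b,c^n,\ell_1^n,\ell_2^n$ are all continuous.

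For the coercivity conditions the decisive point is that both symmetric forms $a$ and $c^n$ are $H^1$-elliptic. For $a$ this is immediate from $a(v,v)=\eps\kin\norm{v}^2 + \eps\tau\norm{\nabla v}^2$, which dominates the linear perturbations as $\norm[H^1]{v}\to\infty$; taking $w_0=0$ settles the first condition. The second condition reduces to ellipticity of $c^n$ on $H^1(\Omega)$, and this is where I expect the main obstacle. Discarding the nonnegative boundary term gives $c^n(w,w)\ge \tau^2\kappa\norm{\nabla w}^2 + \tau c_v\int_\Omega (\theta^{n-1})^{-2} w^2\d x$. The difficulty is that the weight $(\theta^{n-1})^{-2}$ is bounded above by $c^{-2}$ but in general not bounded below away from zero, since $\theta^{n-1}\in H^1(\Omega)$ need not be essentially bounded for $d\ge 2$; thus the zero-order term alone does not control $\norm{w}$. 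However, the weighted seminorm $w\mapsto\bigl(\int_\Omega (\theta^{n-1})^{-2}w^2\d x\bigr)^{1/2}$ is continuous and restricts to a genuine norm on the constants, because $\int_\Omega (\theta^{n-1})^{-2}\d x\in(0,\infty)$. A generalized Poincaré (Deny--Lions) inequality on the bounded connected Lipschitz domain $\Omega$ then yields a constant $\alpha>0$ with $c^n(w,w)\ge\alpha\norm[H^1]{w}^2$ (alternatively one argues by contradiction along a minimizing sequence, extracting an $L^2$-convergent subsequence). Consequently $-\tfrac12 c^n(w,w)$ dominates the $w$-linear terms, so $\cL^n(v_0,w)\to-\infty$ as $\norm[H^1]{w}\to\infty$ for any fixed $v_0\in\cG$, and the abstract theorem applies.

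Uniqueness finally follows from strict convexity--concavity. The $H^1$-ellipticity of $a$ makes $v\mapsto\cL^n(v,w)$ strictly convex, and that of $c^n$ makes $w\mapsto\cL^n(v,w)$ strictly concave. If $(\phi_1,\theta_1)$ and $(\phi_2,\theta_2)$ were two saddle points, the interchange property of saddle points would make $(\phi_2,\theta_1)$ and $(\phi_1,\theta_2)$ saddle points as well; then $\phi_1,\phi_2$ both minimize the strictly convex map $v\mapsto\cL^n(v,\theta_1)$ and $\theta_1,\theta_2$ both maximize the strictly concave map $w\mapsto\cL^n(\phi_1,w)$, forcing $\phi_1=\phi_2$ and $\theta_1=\theta_2$. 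By Proposition~\ref{prop:timediscrete_saddle} this unique saddle point is the unique solution of Problem~\ref{prob:PFSPAT}.
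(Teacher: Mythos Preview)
Your argument is correct. The paper takes a closely related but structurally different route: instead of invoking an abstract minimax existence theorem, it passes to the dual functional
\[
h(w) = -\inf_{v\in H^1(\Omega)^M}\cL^n(v,w) = (\cJ^n)^*\bigl(\ell_1^n - b(\cdot,w)\bigr) + \tfrac12 c^n(w,w) + \ell_2^n(w),
\]
shows that $h$ is strictly convex, continuous and coercive on $H^1(\Omega)$, and hence has a unique minimizer $\theta^n$; then $\phi^n$ is recovered as the unique minimizer of $\cL^n(\cdot,\theta^n)$. This dual-minimization viewpoint is deliberately aligned with the Schur--Newton algorithm developed later in the paper, where the discrete analogue of $h$ plays the central role.

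Both proofs reduce to the same technical core, namely $H^1$-ellipticity of $c^n$. Here the arguments differ slightly. The paper observes that since $\theta^{n-1}\in H^1(\Omega)\subset L^2(\Omega)$, there is a set $\Omega'\subset\Omega$ of positive measure on which $\theta^{n-1}\le c_2$, whence $c^n(w,w)\ge \tfrac{c_v\tau}{c_2^2}\|w\|_{L^2(\Omega')}^2 + \tau^2\kappa\|\nabla w\|^2$, and then (implicitly) invokes a Poincar\'e-type equivalence of $\|\nabla w\|^2 + \|w\|_{L^2(\Omega')}^2$ with the full $H^1$-norm. Your Deny--Lions argument, using that $\int_\Omega(\theta^{n-1})^{-2}\,dx\in(0,\infty)$ so that the weighted $L^2$-seminorm is a genuine norm on constants, is a clean alternative that reaches the same conclusion without singling out a subset.
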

\begin{proof}
    Under the given assumptions, $\ell_1^n$ and $\ell_2^n$ are bounded
    linear functionals and $a(\cdot,\cdot)$ is symmetric and coercive.
    Since $\cG$ is closed and convex the functional $\cL^n(\cdot,w)$
    is strictly convex, coercive, and lower semi-continuous for all 
    fixed $w \in H^1(\Omega)$.
    Thus, we can define the dual functional
    \begin{align*}
        h(w) = - \inf_{v \in H^1(\Omega)^M} \cL^n(v,w) = (\cJ^n)^*(\ell_1^n - b(\cdot,w)) + \tfrac{1}{2}c^n(w,w) + \ell_2^n(w).
    \end{align*}
    Here, $(\cJ^n)^*$ is the convex and continuous polar of $\cJ^n$.
    Notice that $h$ is also convex and continuous because $c^n(\cdot,\cdot)$ is symmetric and positive-definite.

    By integrability of $(\theta^{n-1})^2$ there must be
    a subset $\Omega' \subset \Omega$ with positive measure and a constant
    $c_2>0$ such that $\theta^{n-1}\leq c_2$ on $\Omega'$. As a consequence
    we get
    \begin{align*}
        c^n(w,w) \geq \tfrac{c_v \tau}{c_2^2}\|w\|_{L^2(\Omega')}^2 + \tau^2 \kappa \|\nabla w\|^2
    \end{align*}
    and therefore coercivity of $c^n(\cdot,\cdot)$.
    Hence, $h$ is coercive, continuous, and strictly convex 
    and thus has a unique minimizer.
    Existence and uniqueness now follows from
    the fact that $(\phi^n,\theta^n)$ is a solution of Problem~\ref{prob:PFSPAT},
    if and only if $\theta^n$ is a minimizer of $h$ and $\phi^n = \inlineargmin{v \in H^1(\Omega)^M} \cL^n(v, \theta^n)$.
\end{proof}

In general it is not clear if the uniform positivity of  the inverse temperature 
is preserved  by solving Problem~\ref{prob:PFSPAT}. We refer, however, to \cite{OKlein97} for such kind of results
in the scalar case $M=2$.

\subsection{Adaptive finite element discretization}\label{subsec:fe-discretization}
We will now consider the adaptive finite element discretization of the spatial
Problem~\ref{prob:PFSPAT} for an individual fixed time stepIn order to improve readability we will from now on
drop all superscripts $(\cdot)^n$  that identify the current time step. 
We will designate quantities from the previous time step by the
superscript $(\cdot)^\o$ whenever necessary.

\subsubsection{Finite element discretization}
In the following, we assume that $\cT$ is a simplicial grid that
is either conforming or obtained via local hanging node refinement
of a conforming initial grid.
We will discretize the spatial Problem~\ref{prob:PFSPAT}
with respect to the conforming first order finite element space
\begin{align}\label{eq:fe_space}
    \cS = \cS(\cT) = \Bigl\{ v \in C(\overline{\Omega}) \Bigst v|_\Element \text{ is affine } \forall \Element \in \cT\Bigr\}
        \subset H^1(\Omega).
\end{align}
Notice that $\cS$ has a uniquely defined nodal basis
$\{\lambda_p \st p \in \cN\}$ satisfying $\lambda_p(q) = \delta_{pq}$ for all $p,q \in \cN$
where $\cN$ is the set of non-hanging nodes of $\cT$.
If the grid is obtained by uniform or local hanging node refinement,
the non-conforming mesh hierarchy induces a natural
hierarchy of subspaces of $\cS$ that can be used in
geometric multigrid methods.
For a detailed discussion of finite element spaces
on hierarchies of non-conforming, locally refined grids
we refer to~\cite{graeserthesis,graeser:2014a}.

Note that linearity implies that the Gibbs constraint can be evaluated node-wise, i.e.,
\begin{equation}
    \cG \cap \cS^M = \{v \in \cS^M \st v(p)\in G\;\forall p \in \cN \}.
\end{equation}

\begin{problem}[Discrete spatial multi-phase Penrose--Fife system]\ \\%
    \label{prob:PFSPATDISC}%
    Find the phase field
    $\phi_\cT \in  \cS^M$ and positive inverse temperature $\theta_\cT \in \cS$
    such that
    \begin{subequations}
        \begin{align}
            a(\phi_\cT,v-\phi_\cT) + \chi_\cG(v) - \chi_\cG(\phi_\cT) + b(v-\phi_\cT,\theta_\cT) &\geq \ell_1(v-\phi_\cT),
                \label{pvi:PFSPATDISC1} \\
            b(\phi_\cT,w) - c(\theta_\cT,w) &= \ell_2(w)
                \label{pvi:PFSPATDISC2}
        \end{align}
    \end{subequations}%
    holds
    for all $v\in \cS^M$ and $w \in \cS$.
\end{problem}

Here, $\ell_1,\ell_2$, and $c(\cdot,\cdot)$ are defined as in Problem~\ref{prob:PFSPAT}
but with $\phi^{n-1}$ and $\theta^{n-1}$ replaced by their finite element approximations $\phi^\o$ and $\theta^\o$.
To avoid negative values for $\theta_\cT$ due to overshooting when $q'$ is not resolved by the grid,
we replace the latter by its interpolation in $\cS$.
Notice that $\phi^\o$ and $\theta^\o$ are finite element functions on a grid
$\cTold$. In case of adaptive refinement, $\cTold$ is usually different from $\cT$.
See~\cite{graeser:2014a} for a detailed discussion.

In analogy to its continuous counterpart,  Problem~\ref{prob:PFSPATDISC} 
can be written as a non-linear, non-smooth saddle point problem.

\begin{proposition} \label{prop:SADDLE}
    Problem~\ref{prob:PFSPATDISC} is equivalent to finding 
    $\phi_\cT \in  \cS^M$ and  $\theta_\cT \in \cS$ such that
    \begin{equation}
        \cL(\phi_\cT,w)
            \leq \cL(\phi_\cT, \theta_\cT)
            \leq \cL(v, \theta_\cT)
            \qquad \forall \;  v \in \cS^M,\; w \in \cS,
    \end{equation}
    with the Lagrangian $\cL$ given according to \eqref{eq:lagrange_functional}.
\end{proposition}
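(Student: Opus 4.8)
The plan is to exploit the convex--concave structure of $\cL$ and to reduce the two saddle point inequalities to the two stationarity conditions of Problem~\ref{prob:PFSPATDISC}. First I would recall that, by definition, $(\phi_\cT,\theta_\cT)$ is a saddle point precisely when $\phi_\cT$ minimizes $v \mapsto \cL(v,\theta_\cT)$ over $\cS^M$ (the right inequality) and $\theta_\cT$ maximizes $w \mapsto \cL(\phi_\cT,w)$ over $\cS$ (the left inequality). Once the partner variable is frozen these two one-sided optimization problems decouple, so it suffices to characterize each extremizer by a first-order optimality condition and to match the results with \eqref{pvi:PFSPATDISC1} and \eqref{pvi:PFSPATDISC2}.

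For the maximization in $w$, note that with $\phi_\cT$ frozen the map $w \mapsto \cL(\phi_\cT,w) = \text{const} + b(\phi_\cT,w) - \ell_2(w) - \tfrac12 c(w,w)$ is a smooth, strictly concave quadratic on the finite-dimensional space $\cS$, since $c(\cdot,\cdot)$ is symmetric and (as used in the proof of Theorem~\ref{thm:timediscrete_existence}) positive definite. Hence its unique maximizer $\theta_\cT$ is characterized by the vanishing of the directional derivative in every direction $w \in \cS$, which reads $b(\phi_\cT,w) - \ell_2(w) - c(\theta_\cT,w) = 0$; this is exactly \eqref{pvi:PFSPATDISC2}.

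For the minimization in $v$, with $\theta_\cT$ frozen the map $v \mapsto \cL(v,\theta_\cT) = \cJ(v) - \ell_1(v) + b(v,\theta_\cT) + \text{const}$ is proper, convex and lower semicontinuous, but only the quadratic-plus-linear part is differentiable; the obstacle term inside $\cJ(v) = \tfrac12 a(v,v) + \chi_\cG(v)$ is non-smooth. Here I would invoke the standard characterization of a minimizer of ``differentiable convex plus indicator of a convex set'': $\phi_\cT$ is the minimizer if and only if the variational inequality $a(\phi_\cT,v-\phi_\cT) + b(v-\phi_\cT,\theta_\cT) - \ell_1(v-\phi_\cT) + \chi_\cG(v) - \chi_\cG(\phi_\cT) \geq 0$ holds for all $v \in \cS^M$, which upon rearrangement is precisely \eqref{pvi:PFSPATDISC1}.

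The only point requiring care---and the step I regard as the main obstacle---is the non-smooth minimization in $v$: one must resist differentiating through $\chi_\cG$ and instead argue via the subdifferential, so that the optimality condition comes out correctly as an inequality rather than an equation. Since $\cJ$ is convex and the remaining $v$-dependence is affine, these first-order conditions are both necessary and sufficient, which yields the equivalence in both directions. I would finally remark that this argument is literally the one behind Proposition~\ref{prop:timediscrete_saddle}, carried out verbatim with $H^1(\Omega)^M$, $H^1(\Omega)$ replaced by the conforming subspaces $\cS^M$, $\cS$ and with $\phi^{n-1}$, $\theta^{n-1}$ replaced by their finite element counterparts $\phi^\o$, $\theta^\o$; all bilinear forms and functionals restrict naturally to these subspaces, so no new estimates are needed.
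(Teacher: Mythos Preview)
Your argument is correct and is precisely the standard convex--concave decomposition one would expect here. The paper itself states Proposition~\ref{prop:SADDLE} without proof (just as it does for the continuous analogue, Proposition~\ref{prop:timediscrete_saddle}), treating the equivalence as routine; your write-up supplies exactly the details the paper leaves implicit, including the observation that the argument is the verbatim restriction of the continuous case to the conforming subspaces.
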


Existence and uniqueness are also inherited from the continuous case.

\begin{theorem}\label{thm:discrete:existence}
    Let $\phi^\o \in \cG$, $\theta^\o\in H^1(\Omega)$ and 
    $\theta^\o \geq  c$ a.e.\ in $\Omega$ with a positive constant $c$.
    Then the spatial Problem~\ref{prob:PFSPATDISC} admits a unique solution.
\end{theorem}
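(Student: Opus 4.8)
The plan is to repeat the duality argument of Theorem~\ref{thm:timediscrete_existence} essentially verbatim, after observing that Problem~\ref{prob:PFSPATDISC} is nothing but Problem~\ref{prob:PFSPAT} posed on the closed subspaces $\cS^M \subset H^1(\Omega)^M$ and $\cS \subset H^1(\Omega)$, with the data $\phi^\o,\theta^\o$ in place of $\phi^{n-1},\theta^{n-1}$ and the constraint set $\cG \cap \cS^M$ in place of $\cG$. The proof of Theorem~\ref{thm:timediscrete_existence} uses only four structural ingredients — boundedness of $\ell_1,\ell_2$, symmetry and coercivity of $a$, symmetry and coercivity of $c$, and closedness and convexity of the constraint set — and the point of the present proof is simply that each of these is inherited by the finite element subspaces.

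First I would record that the hypotheses $\phi^\o \in \cG$, $\theta^\o \in H^1(\Omega)$, $\theta^\o \geq c > 0$ are exactly those of Theorem~\ref{thm:timediscrete_existence}, so that the forms and functionals are well defined and bounded on all of $H^1(\Omega)$: the pointwise bound gives $\tfrac{1}{(\theta^\o)^2}\in L^\infty(\Omega)$, whence $c(\cdot,\cdot)$ is a bounded bilinear form (using the trace theorem for the boundary term when $h_\Gamma>0$) and $\ell_1,\ell_2$ are bounded linear functionals. Since $\cS^M$ and $\cS$ are finite-dimensional, hence closed, subspaces, boundedness and the $H^1$-coercivity of the symmetric form $a$ restrict to them immediately.

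The only step meriting its own line is coercivity of $c$ on $\cS$. I would reproduce the key estimate of Theorem~\ref{thm:timediscrete_existence}: by integrability of $(\theta^\o)^2$ there is a subset $\Omega'\subset\Omega$ of positive measure and a constant $c_2>0$ with $\theta^\o\leq c_2$ on $\Omega'$, so that
\begin{align*}
    c(w,w) \geq \tfrac{c_v\tau}{c_2^2}\,\|w\|_{L^2(\Omega')}^2 + \tau^2\kappa\,\|\nabla w\|^2,
\end{align*}
which yields $H^1$-coercivity via a Poincaré--Friedrichs inequality on the connected domain $\Omega$, and a fortiori coercivity on $\cS$. The constraint set $\cG\cap\cS^M$ is closed and convex, most transparently through the nodewise characterization as $\{v\in\cS^M : v(p)\in G\ \forall p\in\cN\}$, a finite intersection of preimages of the closed convex simplex $G$ under nodal evaluation.

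With these ingredients the dual functional $h(w) = -\inf_{v\in\cS^M}\cL(v,w)$ is again convex, continuous, coercive and strictly convex, hence admits a unique minimizer, and $(\phi_\cT,\theta_\cT)$ solves Problem~\ref{prob:PFSPATDISC} if and only if $\theta_\cT$ minimizes $h$ and $\phi_\cT = \inlineargmin{v\in\cS^M}\cL(v,\theta_\cT)$, exactly as in the continuous case. I do not anticipate any genuine obstacle: everything reduces to the observation that a continuous coercive problem restricts to any closed subspace, and the single nontrivial check — coercivity of $c$ — is in fact lighter here, since on the finite-dimensional space $\cS$ the positive-definiteness of $c$ (the gradient term forces $w$ constant and the volume term then forces $w=0$) already entails coercivity, so the subset argument above is not even strictly needed.
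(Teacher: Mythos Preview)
Your proposal is correct and follows exactly the approach the paper indicates: the paper's own proof is the single sentence ``the proof can be carried out using the same arguments as in the proof of Theorem~\ref{thm:timediscrete_existence},'' and you have faithfully unpacked those arguments, checking that each structural ingredient restricts to the finite element subspaces. Your closing observation that positive definiteness of $c$ already suffices for coercivity on the finite-dimensional space $\cS$ is a nice simplification that the paper does not mention.
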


\begin{proof}
    The proof can be carried out using the same arguments as in the proof
    of Theorem~\ref{thm:timediscrete_existence}.
\end{proof}

\subsubsection{Hierarchical a posteriori error estimation}\label{subsubsection:APOSTERIORI}
As the phase field $\phi$ is expected to strongly vary across the phase boundaries,
spatial adaptivity based on a posteriori error estimates is mandatory.
Similarly, the consumption of heat by phase changes may lead to strongly varying $\theta$.
Hierarchical error estimates rely on the solution of local defect problems.
While originally introduced for linear elliptic problems
\cite{FABornemann_BErdmann_RKornhuber_1993b,PDeuflhard_PLeinen_HYserentant_1989a,HolstOvallSzypowski:2011,OCZienkiewicz_JPSRGago_DWKelly_1983a}
this technique was successfully extended to non-linear problems
\cite{REBank_RKSmith_1993a}, constrained minimization
\cite{RHWHoppe_RKornhuber_1994a, RKornhuber_1995b,RKornhuberQZou_2008,SiebertVeeser2007,
graeser_kornhuber_veeser_zou:hest_energy_obstacle:2011}
and non-smooth saddle point problems
\cite{GraeserKornhuberSack2010,graeser:2014a,graeserthesis}.

Following \cite{graeserthesis,GraeserKornhuberSack2010,graeser:2014a}, we now derive
an a posteriori error estimate by a suitable approximation of the
defect problem associated with the defect Lagrangian
\begin{align*}
    \cD(e_\phi,e_\theta) = \cL(\phi_\cT + e_\phi, \theta_\cT + e_\theta).
\end{align*}
In the first step the defect problem is discretized with respect to
a larger finite element space $\cQ^M\times \cQ$, where
$\cQ = \cS(\cT')$ is defined analogously to~\eqref{eq:fe_space} for the grid
$\cT'$ obtained by uniform refinement of $\cT$.
Note that we have $\cQ=\cS \oplus \cV$ with $\cV$ denoting the incremental space
\begin{align*}
    \cV= \op{span}\{\lambda_p' \st p \in \Edges \}
    \subset \op{span}\{\lambda_p' \st p \in \cN' \} = \cS'.
\end{align*}
Here,
$\cN'$ denotes the set of non-hanging nodes in $\cT'$,
$\{\lambda_p' \st p \in \cN'\}$ the nodal basis of $\cS'$,
and $\Edges=\cN' \setminus \cN$
is the set of all edge mid points in $\cT$ that are non-hanging in $\cT'$.

In the second step, the discrete defect problem is localized
by ignoring the coupling between $\cS$ and $\cV$ and also
the coupling between $\lambda_p'$ for all $p \in \Edges$.
Denoting $\cD_p(r,s) = \cD(r\lambda_p',s\lambda_p')$,
this results in the local saddle point problems
\begin{align*}
    (e_{\phi,p},e_{\theta,p}) \in \R^{M}\times \R: \;\;
    \cD_p(e_{\phi,p},s)
    \leq
    \cD_p(e_{\phi,p},e_{\theta,p})
    \leq
    \cD_p(r,e_{\theta,p})
    \;\;\forall (r,s) \in \R^M\times \R
\end{align*}
for all $p \in \Edges$ that give rise to the hierarchical a posteriori error estimate
\begin{equation}
    \eta=\Bigl(\sum_{p\in\Edges} \eta_p^2\Bigr)^{\frac12}, \qquad
    \eta_p^2=
        \normc{e_{\phi,p}\lambda_p'}^2 +
        \normw{e_{\theta,p}\lambda_p'}^2, \qquad
    p \in \Edges\label{eq:APOST}
\end{equation}
with the problem-dependent norms
\begin{align}
    \normc{v}^2 &=
    a(v,v), &
    \normw{w}^2 &= 
        c(w,w)
\end{align}
on $\cV^M$, $\cV$, respectively.

 \subsubsection{Adaptive mesh refinement}\label{subsubsec:adaptivity}

 The initial grid for the adaptive refinement should be sufficiently
 fine to detect basic features of the unknown spatial approximation
 and sufficiently coarse for efficiency of the overall adaptive procedure.
 The construction of such a grid starts with the grid $\cTold$ from the preceding time step.
 In the first time step, we select a suitable, uniformly refined grid $\cTold$.

 We begin by coarsening $\cTold$.
 To this end, we keep all simplices from the grid $\cTold$ from the preceding time step
 that were obtained by at most $j_{\op{min}}$ refinements.  In addition, we keep
 all simplices $\Element$ such that $\phi^{\o}$ exhibits a strong
 local variation on $\Element$ that is not visible after coarsening, i.e.,
 such that
 \begin{align*}
     \| |\nabla (I_{\Element} \phi^{\o}) | \|_{L^\infty(\Element)}\geq\tolcoarsen
     \qquad \textnormal{ and } \qquad
     \| |\nabla (I_{\Element'} \phi^{\o} ) | \|_{L^\infty(\Element')}<\tolcoarsen
 \end{align*}
 holds with $\Element'$ denoting the simplex resulting from coarsening of $\Element$.
$I_{\Element}$ and $I_{\Element'}$ are the linear interpolation operators
 to $\Element$ and $\Element'$, respectively.
 This set of simplices is completed by additional local refinements.
 Possible additional refinement is used to uniformly bound
 the ratio of diameters of adjacent simplices.

The adaptive mesh refinement of the resulting initial grid $\cT$
 is based on the local error indicators $\eta_p$ defined
 in \eqref{eq:APOST}. In each step, the indicators $\eta_{p_i}$, $i=1,\dots,|\Edges|$,
 are arranged with decreasing value, to determine the minimal number $i_0$ of indicators
 such that
 \begin{equation} \label{eq:REFTHRES}
    \sum_{i=1}^{i_0} \eta_{p_i}^2 \geqslant \rho\eta^2
 \end{equation}
 holds with a given parameter $\rho\in [0,1]$.
 Then all simplices $\Element \in \cT$ with the property $p_i\in \Element$
 for some $p_i$ with $i\leq i_0$
 are marked for refinement~\cite{Doerfler1996}.
 Each marked simplex is partitioned by (red) refinement~\cite{Bey2000,bornemann:1993}.
 Again,
 possible additional refinement is used to uniformly bound
 the ratio of diameters of adjacent simplices.
 The refinement process is stopped, when the
  estimated relative error is less than a given tolerance $\toladapt>0$, i.e., if
  \begin{equation}\label{eq:STOPCRIT}
      \eta < \toladapt\cdot
          \biggl( \normc{\phi_\cT}^2 + \normw{\theta_\cT}^2\biggr)^{\frac 1 2}.
  \end{equation}

\section{Algebraic solution}\label{section:solver}
Several methods have been proposed for the algebraic solution of
discretized multi-phase field equations. The solution of multi-phase
Allen--Cahn-type equations via primal--dual active set methods
was discussed in~\cite{BlankGarkeSarbuStyles2013} and multigrid methods
for such problems where proposed
in~\cite{GraeserSander2014_preprint,kornhuber_krause:mg_vector_ac:2006}.
In contrast to the second order problems with minimization structure considered
there, Problem~\ref{prob:PFSPATDISC} is a saddle point problem for a discretized
fourth order equation. For similar problems resulting from multi-component Cahn--Hilliard
systems block Gau\ss--Seidel-type algorithms
with component-wise and vertex-wise blocking where proposed
in~\cite{BloweyCopettiElliott1996} and~\cite{Nuernberg2009}, respectively.
To overcome the mesh-dependence of the Gau\ss--Seidel approach,
a nonsmooth Schur--Newton method was proposed in~\cite{graeser2014}.

While, for Cahn--Hilliard-type systems, the local sum constraint in $G$
can be enforced using the chemical potential as a natural Lagrange multiplier (cf.~\cite{graeser2014}),
Problem~\ref{prob:PFSPATDISC} is structurally different and the introduction
of such a multiplier would change the structure of the problem.
Therefore, we now introduce a nonsmooth Schur--Newton method
that does not involve  such a  multiplier.

\subsection{Matrix notation}
For the presentation of the algebraic solver for the iterative solution
of Problem~\ref{prob:PFSPATDISC} we first formulate this problem
in terms of coefficient vectors and matrices.
To this end let $N=\op{dim}\cS$ and introduce an enumeration of the nodes $\cN=\{p_1,\dots, p_N\}$.
To simplify the presentation,
we use the abbreviated notation $\lambda_k = \lambda_{p_k}$
for the nodal basis $\lambda_1,\dots,\lambda_N$
of $\cS$ and
introduce the basis $\lambda^1,\dots,\lambda^{MN}$ of $\cS^M$
where $\lambda^{\pi(i,k)} = b^i \lambda_{k}$,
$b^i \in \R^M$ is the $i$-th Euclidean basis vector, and $\pi : \lbrace 1,\ldots, N\rbrace \times \{ 1,\dots, M\}$
is the bijective index map given by
\begin{align*}
    \pi(k,i) = i+M(k-1).
\end{align*}

For $v\in \cS^M, w\in \cS$ we then get the associated coefficient vectors $V \in \R^{MN}, W \in \R^N$
\begin{align*}
    v &= \textstyle\sum_{i=1}^{MN} V_i \lambda^i,
    &
    w &= \textstyle\sum_{i=1}^N W_i \lambda_i.
\end{align*}
Using the matrices $A\in\R^{MN,MN}$, $B\in\R^{N,MN}$, $C\in \R^{N,N}$ and vectors $F\in \R^{MN}, G\in \R^{N}$
given by
\begin{align*}
    A_{ij} &= a(\lambda^j, \lambda^i), &
    B_{ij} &= b(\lambda^j, \lambda_i), &
    C_{ij} &= c(\lambda_j, \lambda_i), &
    F_{i} &= \ell_1(\lambda^i), &
    G_{i} &= \ell_2(\lambda_i),
\end{align*}
and the characteristic functional $\chi_{G_N}: \R^{MN} \to \Rinfty$ of
\begin{align*}
    G_N = \{V \in \R^{MN} \st (V_{\pi(k,i)})_{i=1,\dots,M} \in G \quad \forall k \}
        = \{V \in \R^{MN} \st \textstyle\sum_{i=1}^{MN} V_i \lambda^i \in \cG\}
\end{align*} 
Problem~\ref{prob:PFSPATDISC} can be written as:

\begin{problem}[Algebraic variational inequality]\ \\%
    \label{prob:algebraic_vi}%
    Find the coefficient vectors $\Phi \in \R^{MN}$ and $\Theta \in \R^N$ of $\phi_\cT$ and $\theta_\cT$, respectively,
    such that
    \begin{subequations}
        \begin{align}
            \euclideaninner{A \Phi}{V-\Phi} + \chi_{G_N}(V) - \chi_{G_N}(\Phi) + \euclideaninner{B^T \Theta}{V-\Phi} &\geq \euclideaninner{F}{V-\Phi},
                \label{eq:algebraic_vi_1} \\
                B \Phi - C\Theta &= G
                \label{eq:algebraic_vi_2}
        \end{align}
    \end{subequations}%
    holds for all $V \in \R^{MN}$.
\end{problem}

Problem~\ref{prob:algebraic_vi} can equivalently be written in operator
notation as a non-linear saddle point problem using in turn the subdifferential
of $\chi_{G_N}$.

\begin{problem}[Discrete saddle point problem]\ \\%
    \label{prob:discrete_spp}%
    Find the coefficient vectors $\Phi \in \R^{MN}$ and $\Theta \in \R^N$ of $\phi_\cT$ and $\theta_\cT$, respectively,
    such that
    \begin{align}
        \begin{pmatrix}
            A+\partial \chi_{G_N} & B^T\\
        B & -C
        \end{pmatrix}
        \begin{pmatrix}
            \Phi\\
            \Theta
        \end{pmatrix}
        \ni
        \begin{pmatrix}
            F\\
            G
        \end{pmatrix}.
    \end{align}
\end{problem}

For later reference we note that the Lagrangian for this saddle point problem
is given by the following discrete analogue
\begin{align*}
    L(V,W) = J(V) - \euclideaninner{F}{V} + \euclideaninner{BV - G}{W} - \tfrac{1}{2} \euclideaninner{CW}{W}
\end{align*}
of $\cL^n$ where
$J(V)=\tfrac{1}{2}\euclideaninner{AV}{V} + \chi_{G_N}(V)$
is the analogue of $\cJ^n$.

\subsection{Non-smooth Schur--Newton multigrid methods}

In the context of non-smooth Schur--Newton methods as introduced in~\cite{graeserthesis, graeser2014},
it is shown that problems of the form of Problem~\ref{prob:discrete_spp} can equivalently be formulated
as the following dual minimization problem.

\begin{problem}[Dual minimization problem]\ \\
    \label{prob:discrete_dual}%
    Find $\Theta \in \R^N$ such that
    \begin{align*}
        h(\Theta) \leq h(W) \qquad \forall W\in \R^N
    \end{align*}
    where $h:\R^N \to \R$ is the dual functional
    \begin{align*}
        h(W) &= - \inf_{V \in \R^{MN}} L(V,W) = -L(\Phi(W),W)
    \end{align*} 
    and $\Phi(W) = (A+\partial \chi_{G_N})^{-1}(F-B^T W)$.
\end{problem}

\begin{proposition}\label{prop:discrete_dual_problem}
    Problems~\ref{prob:discrete_spp} and \ref{prob:discrete_dual} are equivalent
    in the sense that $(\Phi,\Theta)$ solves Problem~\ref{prob:discrete_spp} if
    and only if $\Theta$ solves Problem~\ref{prob:discrete_dual} and $\Phi = \Phi(\Theta)$.
    Furthermore, the dual functional $h:\R^N \to \R$ is convex and continuously
    differentiable with the Lipschitz-continuous derivative
    \begin{align*}
        \nabla h(W)
            &= -B\Phi(W) + CW + G \\
            &= -B(A+\partial \chi_{G_N})^{-1}(F- B^TW) + CW + G.
    \end{align*} 
\end{proposition}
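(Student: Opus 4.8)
The plan is to treat this as a standard Fenchel-type duality computation for the saddle point Problem~\ref{prob:discrete_spp}, with the central object being the conjugate $J^*$ of the strongly convex functional $J$. First I would carry out the inner minimization explicitly. For fixed $W$ the only $V$-dependent part of $L(V,W)$ is $J(V) - \euclideaninner{F - B^TW}{V}$, so that
\[
    \inf_{V\in\R^{MN}} L(V,W) = -J^*(F - B^TW) - \tfrac12\euclideaninner{CW}{W} - \euclideaninner{G}{W},
\]
and therefore $h(W) = J^*(F - B^TW) + \tfrac12\euclideaninner{CW}{W} + \euclideaninner{G}{W}$. This mirrors the structure of the dual functional already encountered in the proof of Theorem~\ref{thm:timediscrete_existence}.

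The decisive step is to establish the regularity of $J^*$. Since $a(\cdot,\cdot)$ is coercive and symmetric, $A$ is symmetric positive definite, so $J=\tfrac12\euclideaninner{A\cdot}{\cdot}+\chi_{G_N}$ is proper, lower semicontinuous, and strongly convex with modulus equal to the smallest eigenvalue of $A$. By the duality between strong convexity of a function and Lipschitz smoothness of its conjugate, $J^*$ is then finite and continuously differentiable on all of $\R^{MN}$ with a Lipschitz gradient given by $\nabla J^*(p) = (\partial J)^{-1}(p) = (A+\partial\chi_{G_N})^{-1}(p)$; the resolvent is single-valued because $A+\partial\chi_{G_N}$ is strongly monotone. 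In particular the inner infimum is attained at the unique point $\Phi(W) = \nabla J^*(F - B^TW)$, consistent with the definition of $h$ in Problem~\ref{prob:discrete_dual}.

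With this in hand, differentiation of $h$ is routine. As $W\mapsto F - B^TW$ is affine and the remaining terms are smooth, the chain rule gives $h\in C^1$ with
\[
    \nabla h(W) = -B\,\nabla J^*(F - B^TW) + CW + G = -B\Phi(W) + CW + G,
\]
which is the asserted formula. Lipschitz continuity of $\nabla h$ follows since $\nabla J^*$ is Lipschitz and the maps $W\mapsto F - B^TW$, $W\mapsto CW$ are affine, while convexity of $h$ follows because $J^*$ is convex, affine precomposition preserves convexity, and $C$ is positive definite (coercivity of $c(\cdot,\cdot)$, established in the proof of Theorem~\ref{thm:timediscrete_existence}). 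Finally, the equivalence is read off directly: the first line of the inclusion in Problem~\ref{prob:discrete_spp}, $F\in A\Phi+\partial\chi_{G_N}(\Phi)+B^T\Theta$, is exactly the optimality condition $\Phi=(A+\partial\chi_{G_N})^{-1}(F-B^T\Theta)=\Phi(\Theta)$, while the second line $B\Phi-C\Theta=G$ becomes $-B\Phi(\Theta)+C\Theta+G=0$, i.e.\ $\nabla h(\Theta)=0$; since $h$ is convex and differentiable, a vanishing gradient is equivalent to global minimality.

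The main obstacle is the second step, namely justifying that the non-smooth, constrained functional $J$ has a smooth conjugate with Lipschitz gradient. The non-smoothness enters only through the indicator $\chi_{G_N}$ of the compact convex set $G_N$, and the crucial observation is that embedding this indicator inside the strongly convex quadratic leaves the conjugate differentiable. I would make this precise through the duality of strong convexity of $J$ and Lipschitz smoothness of $J^*$, together with the single-valuedness of the resolvent $(A+\partial\chi_{G_N})^{-1}$ guaranteed by strong monotonicity; the remaining manipulations are then elementary.
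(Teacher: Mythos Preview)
Your proposal is correct and follows the same route the paper indicates: the paper does not spell out a proof but refers to \cite[Theorem~2.1]{GraeserKornhuber2009a} and records exactly the representation $h(W)=J^*(F-B^TW)+\tfrac12\euclideaninner{CW}{W}+\euclideaninner{G}{W}$ that you derive as your first step. Your use of the strong convexity of $J$ (from positive definiteness of $A$) to obtain $C^1$-smoothness and Lipschitz gradient of $J^*$, together with the chain rule and the identification $\nabla J^*=(A+\partial\chi_{G_N})^{-1}$, is precisely the standard argument behind that reference, and your reading of the equivalence from the two lines of Problem~\ref{prob:discrete_spp} is clean.
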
 

The proof of Proposition~\ref{prop:discrete_dual_problem} can be done
analogously to the proof of \cite[Theorem~2.1]{GraeserKornhuber2009a}.
This proof also shows that $h$ can be written as
\begin{align*}
    h(W) = J^*(F-B^T W) + \tfrac{1}{2}\euclideaninner{CW}{W} + \euclideaninner{G}{W}
\end{align*}
where $J^* : \R^{MN} \to \R$ is the polar (or conjugate) functional of $J$
which is convex itself.
This especially shows that $h$ is a strongly convex functional
because $C$ is positive definite due to coercivity of the
associated bilinear form $c(\cdot,\cdot)$ (cf. proof of Theorem~\ref{thm:timediscrete_existence}).

As a consequence of Proposition~\ref{prop:discrete_dual_problem} we
can apply gradient-related descent methods of the form
\begin{align}\label{eq:descent_method}
    \Theta^{\nu+1} = \Theta^\nu + \rho_\nu D^\nu
\end{align}
where $D^\nu \in \R^N$ is a decent direction and $\rho_\nu$ a step size.
The non-smooth Schur--Newton method as introduced
in \cite{GraeserKornhuber2009a,graeserthesis, graeser2014}
is such a descent method where $D^\nu$ is taken to be
\begin{align}\label{eq:schur_newton_direction}
    D^\nu = - S_\nu^{-1} \nabla h(\Theta^\nu)
\end{align}
and $S_\nu \in \R^{N,N}$ is a generalized linearization
of the non-smooth, non-linear but Lipschitz continuous
Schur complement operator $-\nabla h$ at $\Theta^\nu$.

The derivation of $S_\nu$ essentially amounts
to deriving a generalized linearization of the operator $(A+\partial \chi_{G_N})^{-1}$
at $Y=F- B^T \Theta^\nu$. For simple component-wise
bound constraints is has been shown in \cite{GraeserKornhuber2009a}
that such a linearization is given by $(A_{\cW(X)})^+$.
Here, $A_{\cW(X)}$ is the restriction of $A$ to $\cW(X) \times \cW(X)$,
$\cW(X)$ is the maximal subspace such that $J$ is locally smooth in
\begin{align*}
    (X + \cW(X)) \cap U_X
\end{align*}
for some neighborhood $U_X$ of $X = (A+\partial\chi_{G_N})^{-1}(Y)$,
and $(\cdot)^+$ is the Moore--Penrose pseudoinverse or, equivalently,
the inverse of $A_{\cW(X)}: \cW(X) \to \cW(X)$.

For the simplex constraints in the present problem we will use
exactly the same approach. In the following we will outline
the construction of $A_{\cW(X)}$ for local simplex constraints
following \cite{GraeserSander2014_preprint}. To this end
we identify vectors $V \in \R^{MN}$
with block-structured vectors $\hat{V} \in (\R^M)^N$
such that $V_{\pi(k,i)} = (\hat{V}_k)_i$.

Due to the product structure
\begin{align*}
    G_N = \{V \in \R^{MN} \st \hat{V} \in G^N\},
\end{align*}
of the feasible set $G_N$ we can determine the subspace
$\cW(X)$ in each block individually. Hence $\cW(X)$
takes the form
\begin{align*}
    \cW(X) = \bigl\{ V \in \R^{MN} \st \hat{V} \in \prod_{k=1}^N \Wl(\hat{X}_k)\bigr\}
\end{align*}
where $\Wl(\hat{X}_k)$ is the maximal subspace where $\chi_G$
is locally smooth near $\hat{X}_k$.
As outlined in \cite{GraeserSander2014_preprint}
the local subspace $\Wl(\xi)$ is given by
\begin{align*}
    \Wl(\xi) = \op{span} \{ b^i - b^j \in \R^M \st 1 \leq i < j \leq M, \,\xi_i >0, \,\xi_j>0 \}
\end{align*}
for $\xi \in \R^M$. Since $\cW(X)$ is a product space the orthogonal projection
$\cP_{\cW(X)} : \R^{MN} \to \cW(X)$ is given by a block diagonal matrix
where the $k$-th diagonal block is the orthogonal projection $\cP_{\Wl(\hat{X}_k)} : \R^M \to \Wl(\hat{X}_k)$.
For an explicit representation of $\cP_{\Wl(\hat{X}_k)} \in \R^{M,M}$ we refer to
\cite{GraeserSander2014_preprint}. Using $\cP_{\cW(X)}$ we now get
\begin{align*}
    A_{\cW(X)} = \cP_{\cW(X)} A \cP_{\cW(X)}.
\end{align*}
Although the chain rule does in general not hold true for generalized Jacobians
in the sense of Clarke (see, e.g.~\cite{graeserthesis}), we define a generalized linearization of
the non-linear Schur complement operator $-\nabla h$ at $\Theta^\nu$
in an analogous manner by
\begin{align}\label{eq:linearized_schur_complement}
    S_\nu = B \Bigl(A_{\cW(\Phi(\Theta^\nu))}\Bigr)^+ B^T + C.
\end{align}

As a consequence of the convexity of $h$ we can show global convergence.
\begin{theorem}\label{thm:schur_newton_convergence}
    Assume that the step sizes $\rho_\nu$ are efficient
    (cf. \cite{ortega_rheinboldt:iterative_solution:1970, GraeserKornhuber2009a}),
    then the iterates produced by the descent method \eqref{eq:descent_method}
    with Schur--Newton directions \eqref{eq:schur_newton_direction} for
    $S_\nu$ given by \eqref{eq:linearized_schur_complement} converge
    to the solution $\Theta$ of Problem~\ref{prob:discrete_dual}
    for any initial guess $\Theta^0 \in \R^N$.
\end{theorem}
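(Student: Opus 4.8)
The plan is to cast Theorem~\ref{thm:schur_newton_convergence} as an instance of the classical convergence theory for gradient-related descent methods applied to the strongly convex dual functional $h$. By Proposition~\ref{prop:discrete_dual_problem} and the subsequent remark, $h$ is continuously differentiable with Lipschitz-continuous gradient and strongly convex; in particular $h$ is coercive and bounded below and possesses the unique minimizer $\Theta$ characterizing the solution of Problem~\ref{prob:discrete_dual}. It therefore suffices to verify that the Schur--Newton directions $D^\nu = -S_\nu^{-1}\nabla h(\Theta^\nu)$ from \eqref{eq:schur_newton_direction}, \eqref{eq:linearized_schur_complement} are uniformly gradient-related, since then the general theorem on descent methods with efficient step sizes (cf.~\cite{ortega_rheinboldt:iterative_solution:1970, GraeserKornhuber2009a}) guarantees $\nabla h(\Theta^\nu)\to 0$ and, by strong convexity, $\Theta^\nu \to \Theta$.

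First I would establish that each $S_\nu$ is symmetric and positive definite, so that $D^\nu$ is a genuine descent direction. The matrix $C$ is symmetric positive definite by coercivity of $c(\cdot,\cdot)$ (see the proof of Theorem~\ref{thm:timediscrete_existence}). Since $A$ is symmetric positive definite, its compression $A_{\cW(X)} = \cP_{\cW(X)} A \cP_{\cW(X)}$ is symmetric positive semi-definite with range $\cW(X)$, so its Moore--Penrose inverse $(A_{\cW(X)})^+$ is symmetric positive semi-definite and hence $B(A_{\cW(X)})^+ B^T$ is as well. Consequently $S_\nu = B(A_{\cW(\Phi(\Theta^\nu))})^+ B^T + C$ is symmetric positive definite, and $\euclideaninner{\nabla h(\Theta^\nu)}{D^\nu} = -\euclideaninner{S_\nu^{-1}\nabla h(\Theta^\nu)}{\nabla h(\Theta^\nu)} \leq 0$ with equality only at a stationary point.

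The key step is to obtain spectral bounds $0 < \lambda_{\min}\leq \lambda(S_\nu)\leq \lambda_{\max} < \infty$ that are uniform in $\nu$. This follows from a finiteness argument: the subspace $\cW(X) = \{V \st \hat{V} \in \prod_{k=1}^N \Wl(\hat{X}_k)\}$ is determined by the combinatorial pattern of positive components of $\hat{X}_k$ at the $N$ nodes, and for each node the local space $\Wl(\xi)$ can take only finitely many values among the subspaces of $\R^M$. Hence $\cW(X)$, and therefore the linearization $S_\nu$, ranges over a finite set of symmetric positive definite matrices as $\nu$ varies. Taking the minimum and maximum of the finitely many eigenvalues yields the claimed uniform bounds.

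With these bounds in hand, gradient-relatedness is immediate: writing $g^\nu = \nabla h(\Theta^\nu)$, the uniform spectral estimates give $\euclideaninner{-g^\nu}{D^\nu} \geq \lambda_{\max}^{-1}\lvert g^\nu\rvert^2$ and $\lvert D^\nu\rvert \leq \lambda_{\min}^{-1}\lvert g^\nu\rvert$, so the angle between $D^\nu$ and $-g^\nu$ stays bounded away from $\pi/2$ and the direction length is comparable to $\lvert g^\nu\rvert$, with constants independent of $\nu$. These are precisely the conditions required for gradient-related directions, and together with the assumed efficiency of the step sizes $\rho_\nu$ they allow invocation of the cited convergence theorem, completing the argument. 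The main obstacle is the uniform spectral control of $S_\nu$; once the finiteness of admissible linearizations is recognized this reduces to an elementary estimate, so no delicate quantitative work is needed.
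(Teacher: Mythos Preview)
Your proposal is correct and follows essentially the same approach as the paper: establish uniform spectral bounds on the $S_\nu$ and then invoke \cite[Theorem~4.2]{GraeserKornhuber2009a}. The paper's proof merely asserts that the $S_\nu$ are uniformly bounded above and below, whereas you supply the finiteness argument (only finitely many possible subspaces $\cW(X)$, hence finitely many possible $S_\nu$) that justifies this assertion; this added detail is welcome but does not constitute a different route.
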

\begin{proof}
    Notice that the $S_\nu$ are uniformly bounded from above
    and below. Hence global convergence follows from
    \cite[Theorem~4.2]{GraeserKornhuber2009a}.
\end{proof} 

Efficient step sizes $\rho_{\nu}$ as required in Theorem~\ref{thm:schur_newton_convergence}
can be obtained by classical step size rules like, e.g., the Armijo rule or bisection.
Notice that it is not necessary to evaluate $S_\nu^{-1}$ exactly in
\eqref{eq:schur_newton_direction} because global convergence is
preserved as long as the approximation of $S_\nu^{-1}$ is sufficiently accurate.
Since the dual functional $h$ is strongly convex, one can also show
global linear convergence with a rate depending on the bounds for $S_\nu$
and the step size rule. For further details we refer to \cite{GraeserKornhuber2009a}.

During each iteration of the algorithm two types of subproblems have
to be solved. The evaluation of $-\nabla h(\Theta^\nu)$ requires
to compute $\Phi(\Theta^\nu) = (A+\partial \chi_{G_N})^{-1}(F-B^T \Theta^\nu)$.
This is equivalent to minimizing $J(\cdot) + \euclideaninner{B^T \Theta^\nu}{\cdot}$,
i.e., a convex minimization problem for a quadratic functional with local simplex constraints.
If the step size rule requires several trial steps
further problems of this type have to be solved for each evaluation
of $h$ or $\nabla h$.
These convex minimization problems can efficiently be solved using non-linear
multigrid methods \cite{kornhuber_krause:mg_vector_ac:2006,GraeserSander2014_preprint}.
More precisely the TNNMG method for simplex-constrained problems as proposed in
\cite{GraeserSander2014_preprint} allows to solve these problems with an
effective complexity of $O(M^2 N)$.
This method was used in all numerical
examples presented below.

The second type of subproblems are the linear problems \eqref{eq:schur_newton_direction}
for the symmetric positive definite operators $S_\nu$. Since each $S_\nu$ is a linear
Schur complement this is equivalent to solving the linear saddle point problem
\begin{align*}
    \begin{pmatrix}
        A_{\cW(\Phi(\Theta^\nu))} & (B\cP_{\cW(\Phi(\Theta^\nu))})^T \\
        B \cP_{\cW(\Phi(\Theta^\nu))} & -C
    \end{pmatrix} 
    \begin{pmatrix}
        \tilde{V}^\nu \\
        D^\nu
    \end{pmatrix}
    =
    \begin{pmatrix}
        0 \\
        \nabla h ( \Theta^\nu)
    \end{pmatrix}
\end{align*} 
whose solution is unique in $(\op{ker} \cP_{\cW(\Phi(\Theta^\nu))})^\perp \times \R^N$.
In the numerical examples presented below we used a linear multigrid method with
a Vanka-type smoother to solve these problems. Notice that there is no convergence
proof for this linear iterative method. To increase its robustness it can be
used as preconditioner for a GMRes iteration.

\section{Numerical experiments}
All our computations are based on a non-dimensionalized 
version of the  Penrose--Fife systems stated in 
Problem~\ref{prob:PF} and Problem~\ref{prob:PFTHIN}, respectively,
as obtained by setting 
\begin{equation}
 \theta= \frac{T_{\rm ref}}{T}
\end{equation}
instead of \eqref{eq:TTRANS}.
While the order parameter $\phi_1$ is representing the liquid fraction,
the order parameters $\phi_2,\dots, \phi_M$ are associated with certain solid states,
as, e.g., crystalline structures, of the given material.
With this in mind, the positive reference value $T_{\rm ref}\in \R$ is chosen to be the melting temperature
and we set $T_{\rm ref} = T_1 = \cdots = T_M = 1$. 
Accordingly we set $L_1=0$ and $L_2=\cdots = L_M > 0$ in all our computational examples.

Efficient step sizes $\rho_\nu$ for 
the Schur--Newton iteration~\eqref{eq:descent_method}
as required in  Theorem~\ref{thm:schur_newton_convergence} are determined by bisection. 
The iteration is stopped, once the
criterion
\begin{align}\label{eq:nsn_termination}
    \tfrac{\Vert \theta^{\nu+1} - \theta^{\nu} \Vert_\theta}{\Vert \theta^{\nu}\Vert_\theta} \leq \tolcorrection
\end{align}
is satisfied. 
We use $\tolcorrection = 10^{-11}$ in all our computations.

For each time step a grid hierarchy is obtained
either by uniform refinement or
according to the adaptive coarsening and refinement strategy described in Subsection~\ref{subsubsec:adaptivity}.

The initial iterate for the algebraic Schur--Newton solver is derived by nested iteration,
i.e.,  on each refinement level an initial iterate is obtained by nodal interpolation 
of the final iterate from the preceding one. 
On the first refinement level, the initial iterate 
is obtained by nodal interpolation of the final approximation 
in the preceding time step.
For the first time step, the continuous initial conditions 
are interpolated to the initial grid $\cTold$.

All numerical experiments were conducted using the DUNE (Distributed and Unified Numerics Environment)
framework and the DUNE-modules \emph{dune-subgrid} and \emph{dune-tnnmg} (cf. ~\cite{bastian_et_al:dune2:2008,bastian_et_al:dune1:2008,graser2009dune}).

\subsection{Experimental order of convergence}\label{subsec:EOC}
In order to numerically assess the spatial discretization error 
of the finite element discretization stated in Problem~\ref{prob:PFSPATDISC},
we consider the multi-phase Penrose--Fife Problem~\ref{prob:PF},
with $\Omega=(0,2)^2\subset \R^2$, $M=5$ phases
of which only the liquid and one solid phase is present,
and the following parameters
\begin{align*}
    \eps &= 6\cdot 10^{-2}, &
    c_v &= 1, &
    q &= 0, &
    \conv &= 0, \\
    \kappa &= 1, &
    \kin &= 1, & L_1 &= 0, & L_\a &= 2, \; \alpha =2,\dots,M.
\end{align*}
We select the  initial temperature $\theta^0=0.5$.
The initial phase field $\phi^0$ is given by 
\begin{align*}
  \phi^0_2(x) &=  
  \begin{cases}
    1 & \textrm{ if } d(x) < 0.5 \\
    \vert\tfrac12\cos(5\pi(d(x)-0.5))+0.5\vert & \textrm{ if } 0.5 \leq d(x) < 0.7  \\
    0 & \textrm{ else }
  \end{cases}
\end{align*}
where $d(x)$ stands for the Euclidean distance from $x$ to $(1,1)$,
$\phi^0_1(x) = 1 -  \phi^0_2(x,t)$, and $\phi^0_\alpha = 0$ for $\alpha = 3, 4, 5$,
as depicted in Figure~\ref{fig:initialvalue}.
We select the uniform time step size $\tau = 5\cdot 10^{-4}$.
A sequence $\cT_0,\dots, \cT_{10}$ of grids
is obtained by uniform refinement of $\cT_0$ consisting of 
a partition of $\Omega$ into two triangles.

\begin{figure}
    \includegraphics[draft=false, width=0.31\textwidth, trim=0 140 0 0, clip]{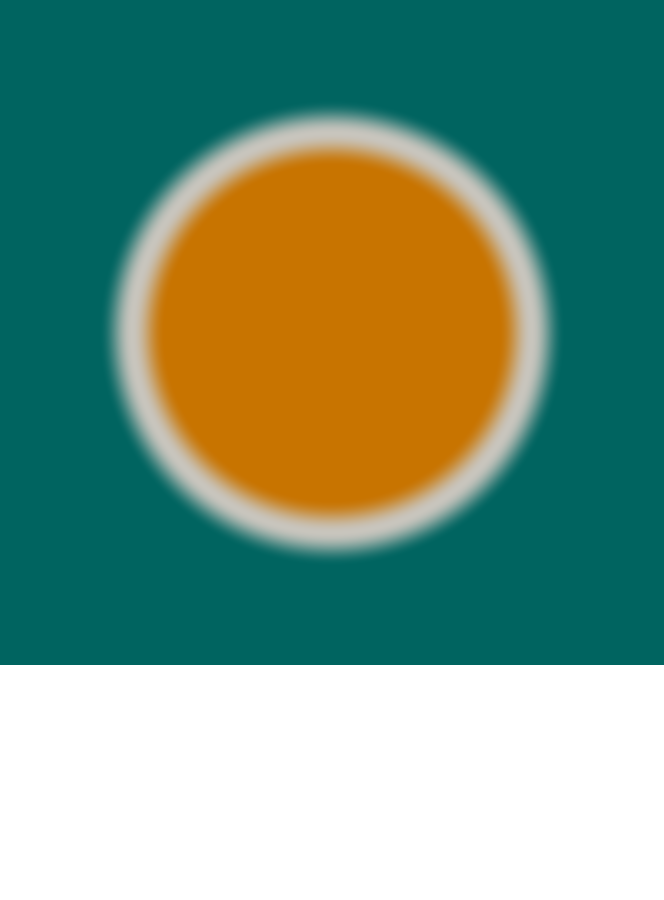}
    \caption{Initial phase field: A circular solid phase (orange) in a liquid environment (teal).}
    \label{fig:initialvalue}
\end{figure}

Figure~\ref{fig:disc-err} shows the approximate discretization error in the first time step
plotted over the mesh size $h_j$, $j=2,\dots,9$. 
The exact error is approximated by $e_\phi = \phi_\cT - \phi^*$ 
and $e_\theta = \theta_\cT - \theta^*$,
with approximations $\phi^*$ and $\theta^*$ obtained from $\cT_{10}$.
Our results suggest optimal order $O(h)$ of the discretization error
$\|e\| = \|e_\phi\|_\phi + \|e_\theta\|_\theta$.

\begin{figure}
 \input{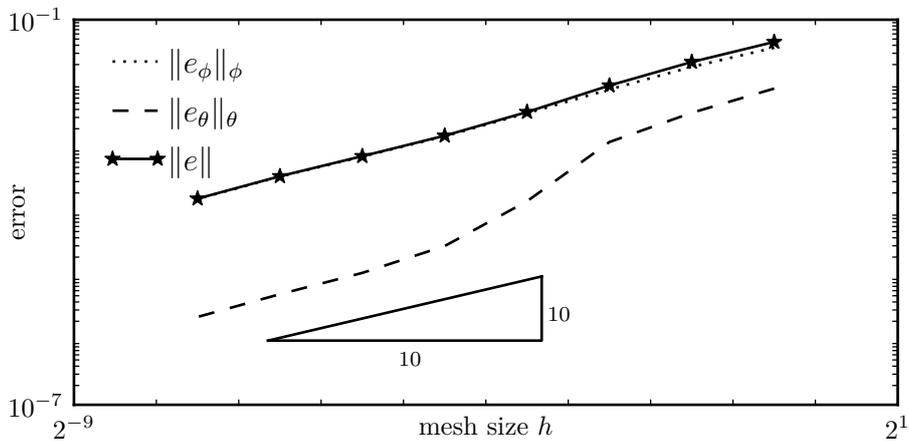}
 \caption{Discretization error $\|e\| = \|e_\phi\|_\phi + \|e_\theta\|_\theta$
     and its components $\|e_\phi\|_\phi$ and $\|e_\theta\|_\theta$ over mesh size.}
  \label{fig:disc-err}
\end{figure}

We next investigate the convergence properties of the non-smooth Schur--Newton method
as applied to the discrete saddle point problem in the first time step.
Figure~\ref{fig:rateItSteps} depicts the number $\nu_{\textrm{max}}$
of iteration steps needed
until the stopping criterion 
\eqref{eq:nsn_termination} is satisfied 
plotted over $N=\op{dim}\cS_j$, where $\cS_j$ is the finite element space associated with $\cT_j$.
The results indicate  mesh independence of 
the Schur--Newton iteration. While up to $\nu_{\textrm{max}}=17$ iteration steps
are required on coarser levels, 
only $\nu_{\textrm{max}}\leq 7$ steps are needed
once the diffuse interface is properly resolved by sufficiently fine grids.
This is in accordance with previous computations with multi-component
Cahn--Hilliard systems~\cite{graeser2014}.

We also computed the approximate solution for the first 500 time steps 
utilizing the grid $\cT_7$ obtained by seven uniform refinements 
to illustrate the evolution of the approximate entropy 
as depicted in Figure~\ref{fig:circleEntropy}.
\begin{figure}
  \input{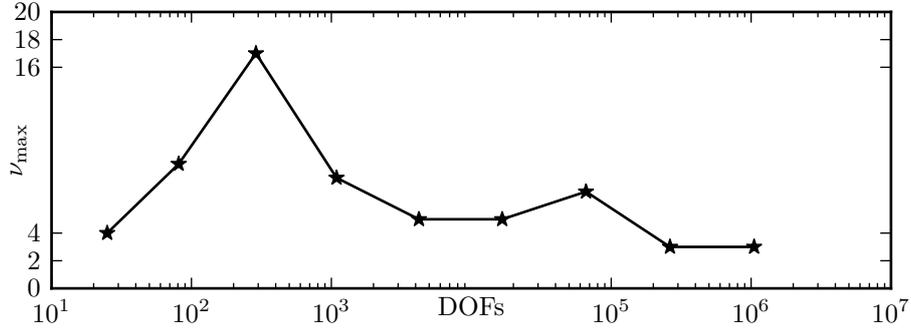}
  \caption{Number of Schur--Newton iterations $\nu_{\textrm{max}}$ needed to solve Problem~\ref{prob:PFSPATDISC} over $N$.}
  \label{fig:rateItSteps}
\end{figure}

\begin{figure}
  \input{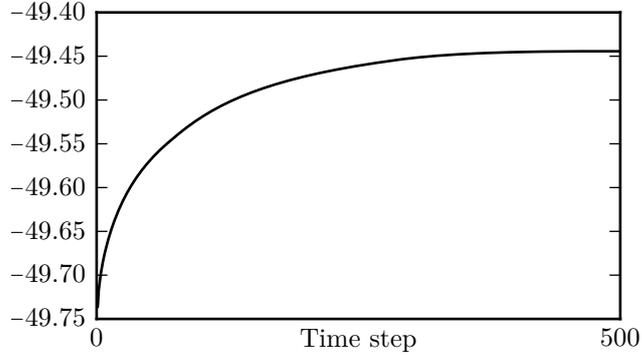}
  \caption{Approximate entropy over time steps.}
  \label{fig:circleEntropy}
\end{figure}

\subsection{Evolution of energy and entropy}
In order to illustrate the equilibration of energy in terms of latent heat and temperature
and the evolution of entropy, we consider the multi-phase Penrose--Fife Problem~\ref{prob:PF}
on the unit square $\Omega=(0,1)^2\subset \R^2$ with  $M=5$ phases 
of which only the liquid and one solid phase are present, and the parameters
  \begin{align*}
      \eps &= 8\cdot 10^{-2}, &
          c_v &= 1, &
      q &= 0, &
          \conv &= 0, \\
          \kappa &= 1, &
      \kin&=1, & L_1 &= 0, & L_\a &= 2,\; \a=2,\dots,5.
  \end{align*}
We choose an initial configuration with two phases (liquid and solid) 
and a planar interface according to
  \begin{align*}
      \phi^0_2 (x_1, x_2) = 
    \begin{cases}
      1 & \textrm{ if } x_1 > x^\star + 0.1, \\
      10(x_1 - x^\star)  & \textrm{ if } x^\star +0.1  \geq x_1 > x^\star \\      
      0 &  \textrm{ if }  x^\star \geq x_1,
    \end{cases} ,
  \end{align*}
$\phi^0_1 = 1- \phi^0_2$, and $\phi^0_\alpha = 0$, $\alpha = 3,4,5$.
The parameter $x^\star$ and constant initial temperature $\theta^0 = \theta^\star$
will be fixed later.
  
We select the time step size $\tau = 5\cdot 10^{-3}$.
The grid $\cT$ is obtained by eight uniform refinements
of an initial partition of $\Omega$ into two triangles.

The evolution of temperature is illustrated in terms of its maximal variation
\begin{align*}
    \theta_d^n = \max_{x\in\Omega}\theta^n(x) - \min_{x\in\Omega}\theta^n(x), 
    \qquad n = 1,\dots, 500,
\end{align*} 
and the average
\begin{align*}
    \theta_m^n = \tfrac12 (\max_{x\in\Omega}\theta^n(x) + \min_{x\in\Omega}\theta^n(x)), 
    \qquad n = 1,\dots, 500,
\end{align*} 
of its extremal values. 
The parameter $x^\star$ and constant initial temperature $\theta^0 = \theta^\star$
are set to  $x^\star=0.8$ and $\theta^\star = 0.2^{-1}$ in our first experiment
and to $x^\star=0.2$ and $\theta^\star = 1.5^{-1}$ in our second experiment.
The corresponding two evolutions are illustrated in Figure~\ref{fig:feedback01} 
and Figure~\ref{fig:feedback02}, respectively. 
Both figures show several time steps of the phase field in the left picture.
As the solution is constant in vertical direction only a cut-out is shown.
The picture on the right shows the evolution 
of temperature in terms of $\theta_m^n$ and $\theta_d^n$ and of
the entropy $S=\hat{S}(\theta^n, \phi^n)$ (cf. \eqref{eq:S_hat}) approximated by numerical quadrature.

In the first experiment we observe a growth of the initial solid grain
that slows down continuously due to intrinsic specimen heating by solidification.
Conversely, the shrinking of the initial grain observed in the second experiment
slows down due to intrinsic cooling by melting.
In both experiments,
absorption or release of latent heat
is driving the approximate temperature $\tfrac1{\theta^n}$ towards
the melting temperature $T=1$ at equilibrium.
Both cases exhibit a monotonically increasing entropy.

\begin{figure}
  \newcommand{\strip}[1]{\includegraphics[width=0.8\textwidth, trim={0 550 0 0}, clip, frame]{#1}\newline}
  \begin{minipage}{0.39\textwidth}
      \vspace{10pt}
      \strip{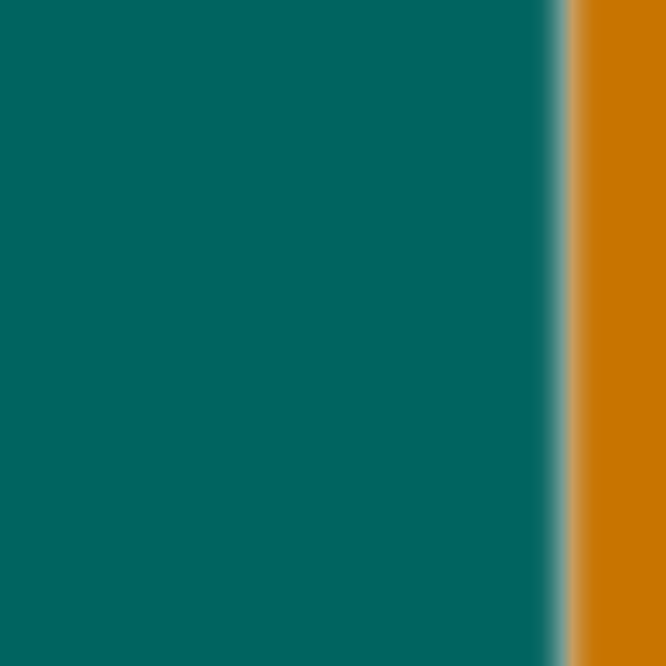}
      \strip{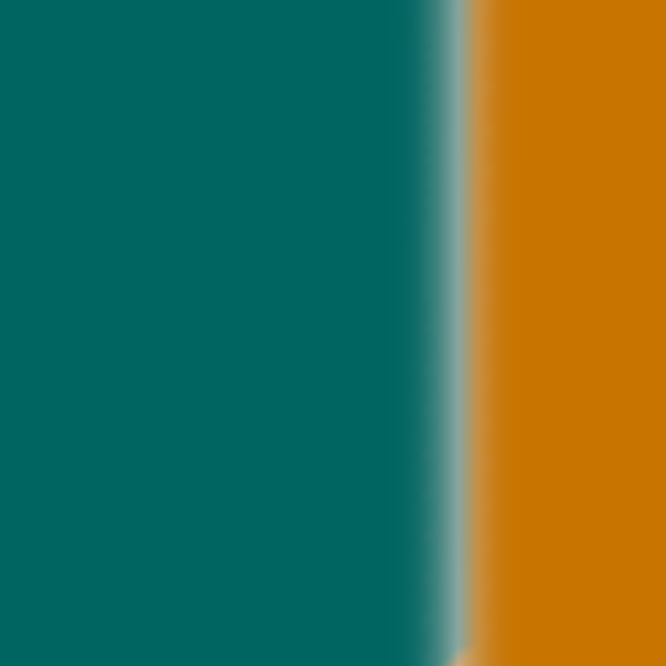}
      \strip{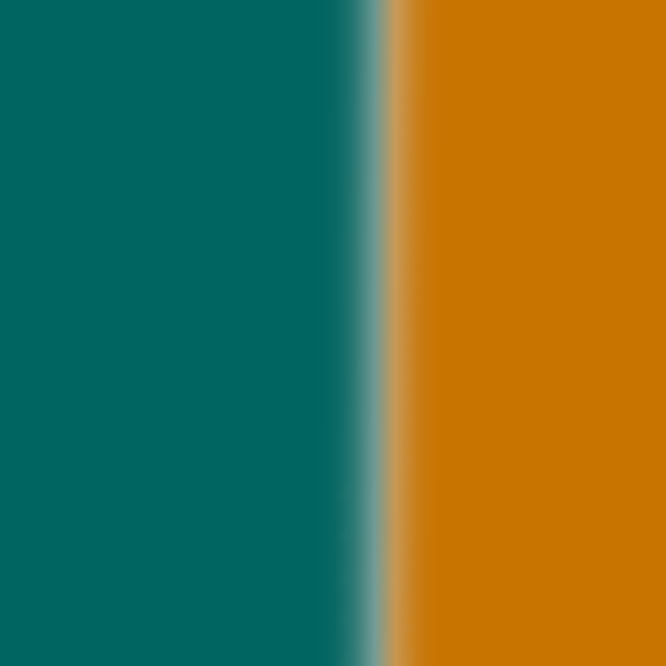}
      \strip{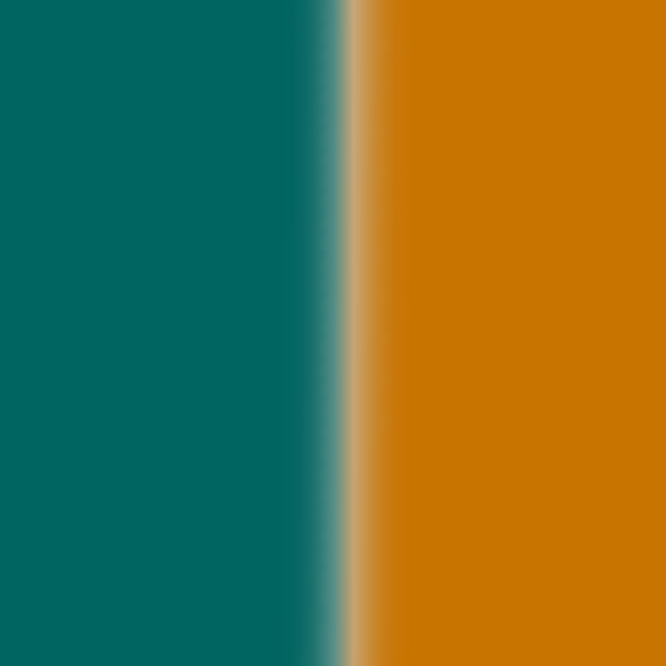}
      \strip{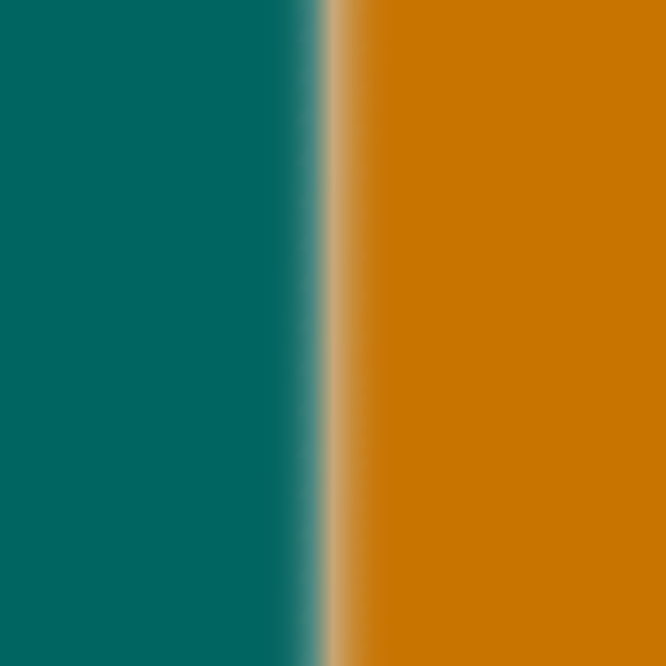}
      \strip{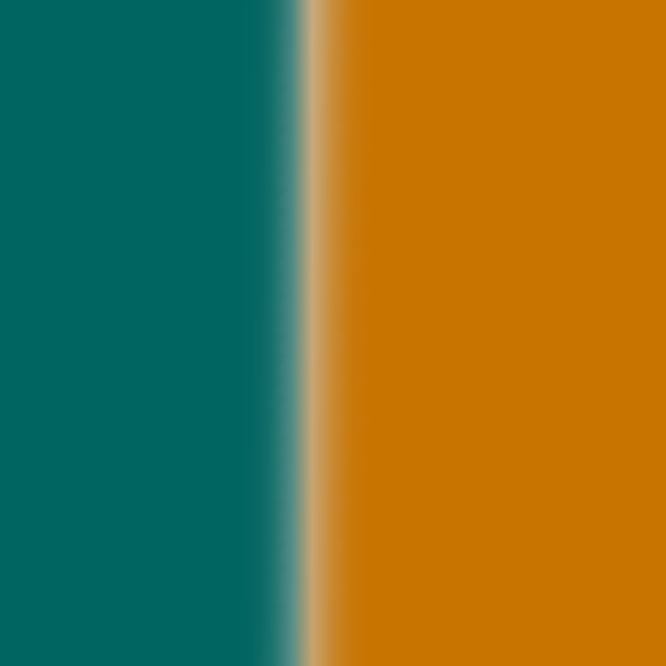}
  \end{minipage}%
  \hfill%
  \begin{minipage}{0.59\textwidth}
      \input{img/growth.pgf}
  \end{minipage}
  \caption{Solidification induced by latent heat. Left: Evolution of phases  by means of approximations at the time steps 0, 40, 80, 120, 160, 500. Right: Evolution of inverse temperature $\theta$ and approximate entropy $S$.}
  \label{fig:feedback01}
\end{figure}

\begin{figure}
  \newcommand{\strip}[1]{\includegraphics[width=0.8\textwidth, trim={0 550 0 0}, clip, frame]{#1}\newline}
  \begin{minipage}{0.39\textwidth}
      \vspace{10pt}
      \strip{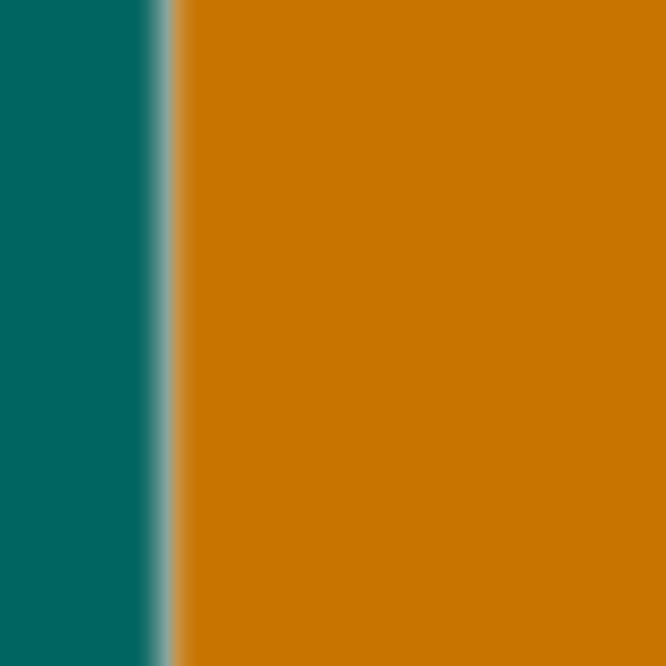}
      \strip{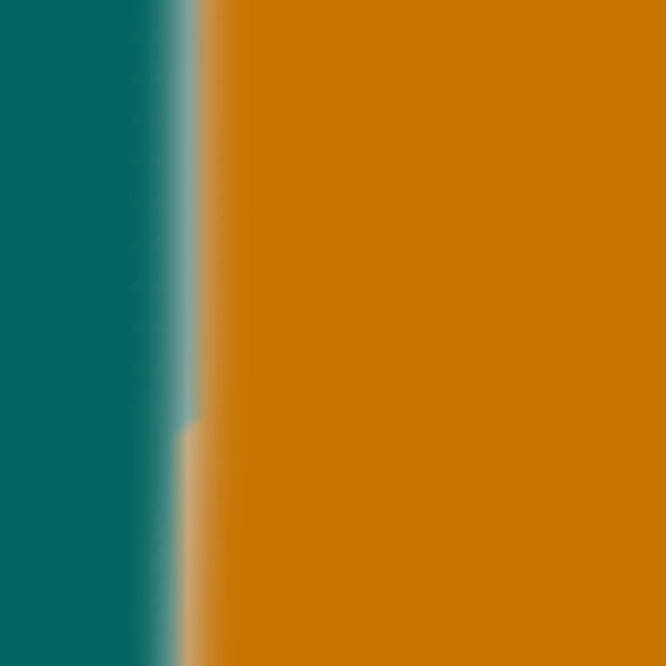}
      \strip{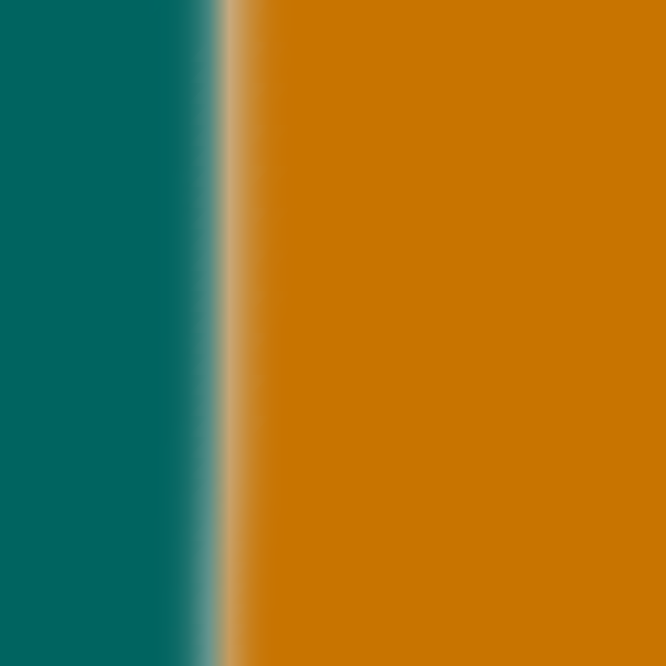}
      \strip{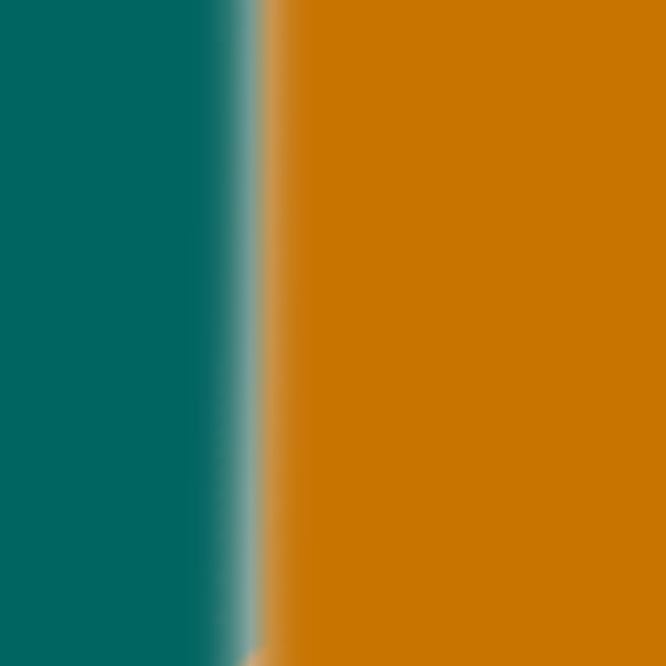}
      \strip{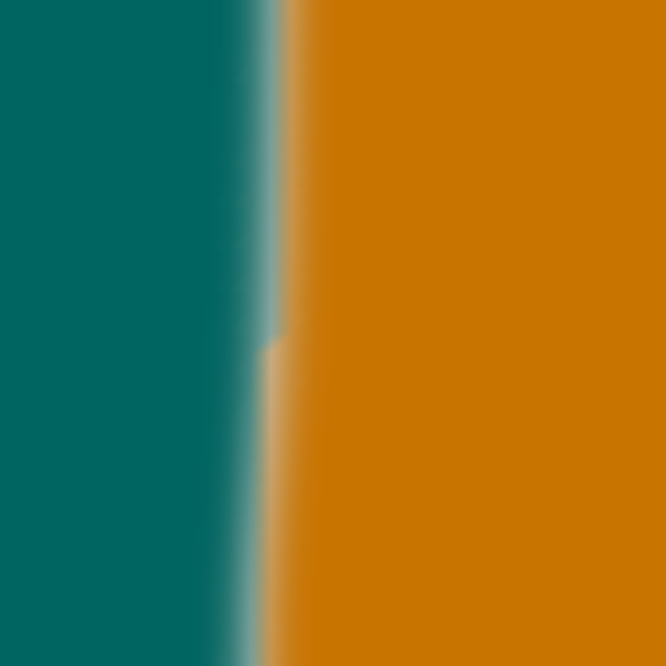}
      \strip{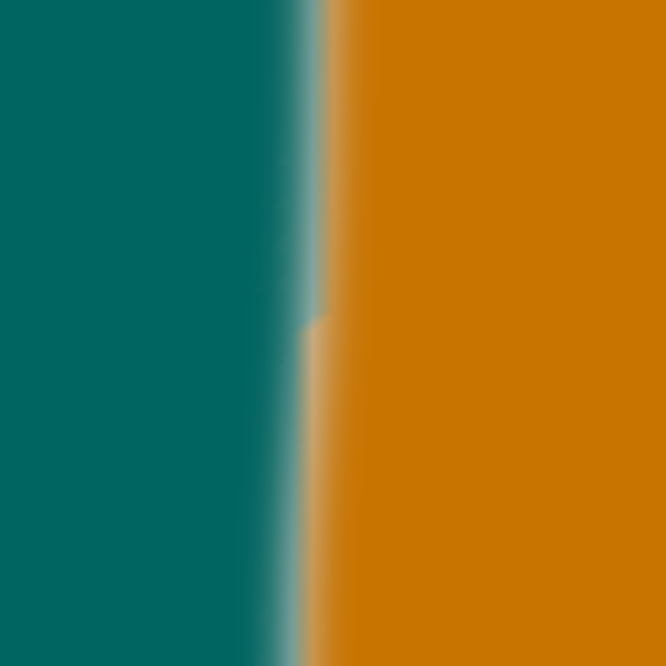}
  \end{minipage}%
  \hfill%
  \begin{minipage}{0.59\textwidth}
      \input{img/shrink.pgf}
  \end{minipage}
  \caption{Melting induced by latent heat. Left: Evolution of phases  by means of approximations at the time steps 0, 40, 80, 120, 160, 500. Right: Evolution of inverse temperature $\theta$ and approximate entropy $S$.}
  \label{fig:feedback02}
\end{figure}

\section{A liquid phase crystallization process}
\label{s:application}

Liquid phase crystallization (LPC) is an emerging technology
to produce silicon thin film solar cells with advanced photoelectronic properties
that enable high efficiency devices.
In an LPC process, silicon is deposited on a substrate 
and then is  swept over with a heat source 
for local melting and subsequent recrystallization
to coarser, photoelectronically beneficial structures.
Optimization of parameters like speed, shape or intensity of the heat source
for various semi-conducting materials 
is the subject of current experimental research,
cf., e.g., \cite{amkreutz2011electron, eggleston2012large, kuhnapfel2015towards}.

Mathematical modelling of LPC processes can be performed in the 
framework of multi-phase field models presented in Section~\ref{ss:assumptions}.
To this end, we consider the thin-film approximation Problem~\ref{prob:PFTHIN} 
on $\Omega = (0,2)^2$ 
with $M=5$ phases with $\phi_\a$, $\a=2,\dots M$, representing different crystal structures 
and the parameters
  \begin{align*}
   \eps &= 5\cdot 10^{-2}, &
   c_v &= 1, &
   T_\Gamma &= 0.1, &
   T_\a &= 1, \;  \a=2,\dots, M,
   \\
   \conv' &= 5\cdot 10^2, &
   \kappa &= 1, &
    L_1 &= 0, & 
      L_\a &= 1, \;  \a=2,\dots, M .
\end{align*}

In order to prescribe a slower solid--solid interface evolution
in comparison the to solid--liquid interfaces,
we now choose a solution-dependent kinetic coefficient 
$\kin = (\kin_\alpha(\phi, \nabla\phi))_{\alpha=1}^N$ 
according to 
\begin{align*}
    \kin_\alpha(\phi_1, \dots, \phi_M, \nabla\phi_1, \dots, \nabla\phi_M) &= \begin{cases}
        100 & \vert\phi_1\nabla\phi_\alpha - \phi_\alpha\nabla\phi_1\vert < 10^{-5},\\ 1 & \textrm{else}.
    \end{cases}
\end{align*}

The heat source is represented by
\begin{align*}
    q(\theta, x, t) &=  q_{\op{max}}(\theta) \cdot\exp\left(-\tfrac{\lvert x_1 - q_p(t)\rvert}{2q_w^2}\right), \\
    q_{\op{max}}(\theta) &= \tfrac1\theta h_q (1-\tfrac{\theta_q}{\theta}) + \conv'(\tfrac1{\theta} - T_\Gamma)
\end{align*}
and we select the parameters
\begin{align*}
    q_p(t) &= 0.9 + 1.5t, &
    q_w &= 0.2, &
    \theta_q &= \tfrac18 &
    h_q &= 5\cdot 10^2.
\end{align*}
Notice that the consistency statements of Proposition~\ref{prop:THERMOCONSISTENCY} and~\ref{prop:THERMOCONSISTENCY-disc}
cannot be applied in this case, 
because $q\neq 0$ and because the coefficient $\kin$ depends on $\phi$.
It is unclear if similar results can be shown for solution dependent coefficients.

Observe that the heat source peaks at $x_1=q_p(t)$ and is moving across the device
from left to right with constant speed.
The initial temperature is given by $\theta^0=10=\tfrac1{T^0} = \tfrac1{0.1}$
and the  initial phase configuration $\phi^0$ 
is depicted in the upper left picture of Figure~\ref{fig:lpc01}
with teal color representing the liquid phase.
We choose an initial value  that already prescribes a local liquid phase, 
because this simplifies to exclude superheating effects such as
instantaneous global melting of the whole material.

\begin{figure}
    \newcommand{\strip}[1]{\includegraphics[width=0.45\textwidth, trim={15 250 565 140}, clip]{img/lpc-evolution#1-mesh.png}}
   \strip{005}
   \strip{100}
    \caption{Adaptively refined grid for time step 5 (left) and 100 (right).}
    \label{fig:lpc_mesh}
\end{figure}

\begin{figure}
  \newcommand{\strip}[1]{\includegraphics[width=0.45\textwidth, trim={15 250 15 140}, clip]{img/lpc-evolution#1.png}}
  \strip{000}
  \strip{005}
  \strip{010}
  \strip{020}
  \strip{100}
  \strip{250}
  \caption{%
      Initial distribution of phases and temperature (top left)
      and approximate distribution of phases and temperature for the time steps 5, 10, 20, 100, 250.}
  \label{fig:lpc01}
\end{figure}

We select the time step size $\tau = 2\cdot 10^{-3}$.
In each time step,
we construct a sequence of locally refined meshes $\cT_0, \dots, \cT_J$
using the adaptive refinement algorithm described in Subsection~\ref{subsubsec:adaptivity}.
The derefinement and refinement parameters are selected as follows
\begin{align*}
    j_{\op{min}}&= 2, & 
    \tolcoarsen &= 10^{-6}, &
    \rho &= 0.9, & 
    \toladapt &= 8\cdot10^{-3}.
\end{align*}
In the first time step, we start with an initial grid $\cTold$ obtained 
by eight uniform refinements of an initial partition of $\Omega$ into two triangles.
The final mesh $\cT$ for time steps 5 and 100 is depicted in Figure~\ref{fig:lpc_mesh}.
In both cases, the mesh is obtained by 6 adaptive refinement steps after 
coarsening.

\begin{figure}
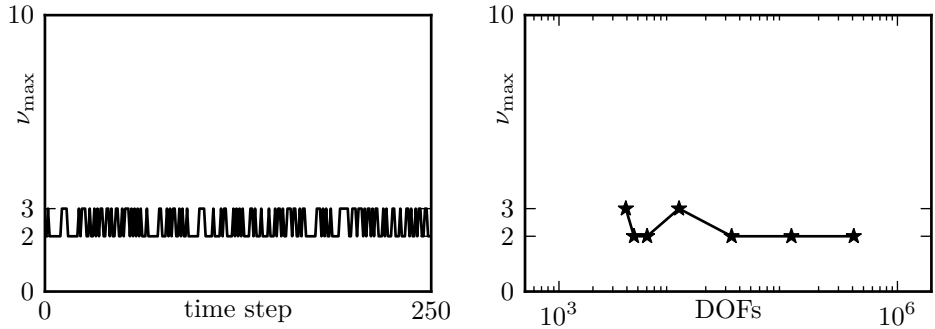

  \begin{minipage}{0.49\textwidth}
      \input{img/rateLPC.pgf}
  \end{minipage}
  \begin{minipage}{0.49\textwidth}
      \input{img/rateLPC2.pgf}
  \end{minipage}
  \caption{
  Robustness of Schur--Newton convergence.
  Left: $\nu_{\textrm{max}}$ corresponding to the final mesh over time steps.
  Right: $\nu_{\textrm{max}}$ corresponding to different adaptively refined meshes
  over degrees of freedom $N$ for the fixed time step $5$.}
  \label{fig:rateLPC}
\end{figure}

Figure~\ref{fig:lpc01} shows the (approximate) evolution of phases and temperature.
For each of the time steps 0, 5, 10, 20, 100, 250
the left picture depicts the liquid and the different crystalline  phases
while the right picture shows the temperature distribution.
The liquid phase adapts to the shape of the heat source
in course of the evolution.
As the heat source travels on,
the right hand crystalline phases start melting,
while recrystallization occurs on the left solid--liquid interfaces,
because the local temperature drops below melting temperature.
Note that  recrystallization leads to coarser grain structures
which is a  characteristic feature of LPC.

To briefly highlight the efficiency of the Schur--Newton method with nested iteration,
the number of Schur--Newton steps $\nu_{\textrm{max}}$
required on the finest mesh is plotted over the time step in the left of Figure~\ref{fig:rateLPC}.
We observe that this number does not exceed 3 in any time step.
Mesh independence is illustrated by the number of Schur--Newton steps
over $N=\op{dim}\cS_j$ in time step $5$.


\section*{Acknowledgment}
The work was supported by the Helmholtz Virtual Institute
HVI-520 ``Microstructure Control for Thin-Film Solar Cells'',
the Einstein Foundation Berlin via the research center {\sc Matheon},
and by the Sino-German Science Center (grant id 1228) on the occasion of 
the Chinese-German Workshop on Computational and Applied Mathematics 
in Augsburg 2015.
\bibliographystyle{abbrv}
\bibliography{paper}

\end{document}